\RequirePackage{fix-cm}

\documentclass{article}     

\usepackage[T1]{fontenc}       
\usepackage{lmodern}           
\usepackage{amsmath}           
\usepackage{amsfonts}
\usepackage{graphicx}
\usepackage{complexity}%
\usepackage{enumitem}
\usepackage{fullpage}%
\usepackage{hyperref}
\usepackage{lineno}
\usepackage[framemethod=tikz]{mdframed}
\usepackage{pict2e}
\usetikzlibrary{arrows.meta}

\usepackage{subcaption}

\usepackage{amsthm}
\newtheorem{theorem}{Theorem}
\newtheorem{proposition}{Proposition}
\newtheorem{corollary}{Corollary}
\newtheorem{definition}{Definition}
\newtheorem{lemma}{Lemma}
\usepackage[affil-it]{authblk}
\newtheorem{observation}[theorem]{Observation}

\DeclareMathOperator{\pthree}{P_3}
\DeclareMathOperator{\pthrees}{P_3^*}

\DeclareMathOperator{\opthree}{\pthree}
\DeclareMathOperator{\opthrees}{\pthrees}

\DeclareMathOperator{\hull}{hull}

\DeclareMathOperator{\hullops}{\hull_{\opthrees}}

\newcommand{\phullop}[1]{\hull_{\scriptscriptstyle\opthree}^{#1}}
\newcommand{\phullops}[1]{\hull_{\scriptscriptstyle\opthrees}^{#1}}

\DeclareMathOperator{\pfour}{P_4}
\DeclareMathOperator{\ofour}{O_4}

\DeclareMathOperator{\Path}{P}
\DeclareMathOperator{\CapO}{O}
\DeclareMathOperator{\Cycle}{C}
\newcommand{\pn}[1]{\ensuremath{\overrightarrow{\Path_{#1}}}}
\newcommand{\on}[1]{\ensuremath{\overrightarrow{\CapO_{#1}}}}
\newcommand{\cn}[1]{\ensuremath{\overrightarrow{\Cycle_{#1}}}}

\DeclareMathOperator{\opfour}{\overrightarrow{\pfour}}
\DeclareMathOperator{\oofour}{\overrightarrow{\ofour}}

\DeclareMathOperator{\ext}{Ext}

\DeclareMathOperator{\extps}{\ext_{\scriptscriptstyle{\opthrees}}}
\newcommand{\pextp}[1]{\ext_{\scriptscriptstyle{\opthree}}^{#1}}
\newcommand{\pextps}[1]{\ext_{\scriptscriptstyle{\opthrees}}^{#1}}

\DeclareMathOperator{\intfunc}{I}
\DeclareMathOperator{\intfuncops}{\intfunc_{\opthrees}}
\newcommand{\pintfuncops}[1]{\intfunc_{\opthrees}^{#1}}

\DeclareMathOperator{\dist}{dist}
\newcommand{\odist}{\ensuremath{\overrightarrow{\dist}}}

\begin{document}
%\linenumbers
\title{Convex geometries and directed paths on three vertices}

\author[1]{J\'ulio Ara\'ujo%
  %\thanks{Electronic address: \texttt{julio@mat.ufc.br}}
  }
\affil[1]{Departamento de Matem\'atica, Universidade Federal do Cear\'a, Brazil}

\author[2]{Mitre C. Dourado%
  %\thanks{Electronic address: \texttt{mitre@ic.ufrj.br}}
  }
\affil[2]{Instituto de Computa\c{c}\~{a}o, Universidade Federal do Rio de Janeiro,  Brazil}

\author[3]{Marisa Gutierrez%
  %\thanks{Electronic address: \texttt{marisa@mate.unlp.edu.ar}}
  }
\affil[3]{Facultad de Ciencias Exactas, Universidad Nacional de La Plata, Argentina}

\author[4]{Fabio Protti%
  %\thanks{Electronic address: \texttt{fabio@ic.uff.br}}
  }
\affil[4]{Instituto de Computa\c{c}\~{a}o, Universidade Federal Fluminense, Brazil}

\author[3]{Silvia Tondato%
  %\thanks{Electronic address: \texttt{tondato@mate.unlp.edu.ar}}
  }

\affil[ ]{\small{julio@mat.ufc.br, mitre@ic.ufrj.br, \{marisa,tondato\}@mate.unlp.edu.ar,fabio@ic.uff.br}}

% \author{J\'ulio Ara\'ujo \and Mitre C. Dourado\and Marisa Gutierrez \and Fabio Protti \and Silvia Tondato}

% \institute{J\'ulio Ara\'ujo at Departamento de Matem\'atica, Universidade Federal do Cear\'a, Brazil. \email{julio@mat.ufc.br}\\ \and 
% Mitre C. Dourado at Instituto de Computa\c{c}\~{a}o, Universidade Federal do Rio de Janeiro,  Brazil. \email{mitre@ic.ufrj.br}\\ \and Fabio Protti at
% Instituto de Computa\c{c}\~{a}o, Universidade Federal Fluminense, Brazil. \email{fabio@ic.uff.br} \\ \and  Marisa Gutierrez \and Silvia Tondato at CMaLP, Facultad de Ciencias Exactas, Universidad Nacional de La Plata, Argentina. \email{marisa@mate.unlp.edu.ar, tondato@mate.unlp.edu.ar}}

%%%%%%%%%%%%%%%%%%%%%%%%%%%%%%%%%%%%%%%%%%%%
\date{June 23, 2026}

\maketitle

\begin{abstract} 

A \emph{convexity space} is an ordered pair $(V,\mathcal{C})$, where $V$ is an arbitrary set and $\mathcal{C}$ is a family of subsets of $V$, called \emph{convex sets}, which contains $\{\emptyset,V\}$ and is closed under intersections and nested unions of its elements. For any $S\subseteq V$, the \emph{convex hull} of $S$ is the inclusion-wise minimum convex set $C\in \mathcal{C}$ such that $S\subseteq C$. For a convex set $C\in \mathcal{C}$, an element $p\in C$ is an \emph{extreme} of $C$ if $p$ does not belong to the convex hull of $C\setminus\{p\}$. A convexity $\mathcal{C}$ defined over $V$ is a \emph{convex geometry} (a.k.a. \emph{geometric convexity}) if any convex set $C\in \mathcal{C}$ is the convex hull of its extreme elements.

Given an oriented graph $D = (V,A)$, the family $\mathcal{C}$ of subsets of $V$ is the $\opthree$-convexity defined over $D$ if $\mathcal{C}$ is formed by all (convex) sets $C\subseteq V$ satisfying the following property: no vertex $v\in V\setminus C$ is the central vertex of a directed path $P=(u,v,w)$ with $\{u,w\} \subseteq C$. In the
$\opthrees$-convexity defined over $D$, we have that $\mathcal{C}$ is formed by all (convex) sets $C\subseteq V$ satisfying the following property: no vertex $v\in V\setminus C$ is the central vertex of a directed path $P=(u,v,w)$ such that $\{u,w\} \subseteq C$ and $(u,w)\notin A$.

In this work, we present necessary and sufficient conditions over an oriented graph $D$ so that the $\opthree$-convexity over $D$ is geometric, or the $\opthrees$-convexity over $D$ is geometric. While the first case implies a polynomial-time algorithm to decide whether the $\opthree$-convexity over $D$ is a geometric, we show that it is $\coNP$ to decide whether the $\opthrees$-convexity over $D$ is a convex geometry. We also present a family of oriented graphs, termed acyclic indifference oriented graph, and demonstrate that deciding whether the $\opthrees$-convexity over an acyclic indifference oriented graph is geometric can be solved in polynomial-time.

\medskip\noindent\textbf{Keywords:} Graph convexity; Oriented graphs ; $\opthree$-convexity; $\opthrees$-convexity 
%\keywords{Graph convexity \and Oriented graphs \and $\opthree$-convexity \and $\opthrees$-convexity }
%\subclass{ 05C38 \and  05C75 \and 05C69 \and 05C12}
\end{abstract}

%%%%%%%%%%%%%%%%%%%%%%%%%%%%%%%%%%%%%%%%%%%%%%%%%%
\section{Introduction} \label{sec:intro}

For further notions on Graph Theory and Abstract Convexity, the reader is referred to~\cite{BM2008,vandeVel1993}.
For an extensive
overview of directed graphs, see~\cite{BG2008}. In this work, we only consider simple and finite (directed) graphs.

A \emph{convexity space} is an ordered pair $(V,\mathcal{C})$, where $V$ is an arbitrary set, and $\mathcal{C}$ is a family of subsets of $V$, called \emph{convex sets}, that satisfies:
\begin{enumerate}[label=\alph*)]
  \item $\emptyset,V\in \mathcal{C}$;
  \item For all $\mathcal{C'}\subseteq \mathcal{C}$, we have $\bigcap \mathcal{C'}\in \mathcal{C}$;
  \item Every nested union of convex sets is a convex set.
\end{enumerate}

In the case of a finite set $V$, the condition of nested unions is always satisfied, and we can only consider the first two.

We often say that the family $\mathcal{C}$ is a \emph{convexity} over $V$. For any $S\subseteq V$, the \emph{convex hull} of $S$ is the inclusion-wise minimum convex set $C\in \mathcal{C}$ such that $S\subseteq C$. For a convex set $C\in \mathcal{C}$, an element $p\in C$ is an \emph{extreme} of $C$ if $p$ does not belong to the convex hull of $C\setminus\{p\}$.

For a positive integer $d$, it is well-known that a convex set in $\mathbb{R}^d$ is any subset $S\subseteq \mathcal{R}^d$ such that, for any two points $p_1,p_2\in S$, the points in the straight-line segment linking $p_1$ to $p_2$ must also belong to $S$. In this context, a well-known result is that any convex set in $\mathbb{R}^d$ that is closed and bounded corresponds to the convex hull of its extreme points~\cite{KreinMilman1940,Minkowski1911}. More generally, a convexity $\mathcal{C}$ defined over $V$ is a \emph{convex geometry} (a.k.a. \emph{geometric convexity}) if any convex set $C\in \mathcal{C}$ is the convex hull of its extreme elements. When it is not necessarily the case, given a convex set $C$, we say that $C$ is {\em geometric} if $C$ is the convex hull of its extreme elements.

When the set $V$ is the set of vertices of a (directed) graph, the convexity space $(V,\mathcal{C})$ is a \emph{graph convexity}. There are several graph convexities defined in the literature~\cite{ADPS2025}. The study of graph convexities has received a lot of attention from the community, especially in the last 25 years. Most of them study (non-directed) graphs, even if some of the older works in this domain address the oriented case~\cite{erdHos1972some,pfaltz1971convexity}.  

Most graph convexities are defined in terms of (directed) paths of the input (resp. directed) graph. More precisely, the convex sets $C$ in the family $\mathcal{C}$ satisfy the following property: if $u,w\in C$, then all internal vertices of (resp. directed) $(u,w)$-paths satisfying some constraint $\Pi$ must also belong to $C$. When the constraint $\Pi$ is that the path is a shortest path, then the graph convexity is called \emph{geodesic}. If $\Pi$ corresponds to the property of being an induced path, then the convexity is called \emph{monophonic}. When $\Pi$ requires that the path has length two, we have the \emph{$\opthree$}-\emph{convexity}, while if the path is further required to be an \emph{induced/shortest} path on three vertices, then such convexity is known as \emph{$\opthrees$}-\emph{convexity}. 

Note that, by definition, for any $u,w \in C$, one must consider all directed $(u,w)$-paths and all directed $(w,u)$-paths satisfying the constraint $\Pi$, which of course, may not necessarily contain the same internal vertices in the directed case.

Graph convexities defined over directed graphs in the literature usually assume that they are indeed oriented graphs, which is a hypothesis that we also consider in this work. Recall that $D$ is an \emph{oriented graph} if it is an orientation of a simple graph $G$, i.e. $D$ is obtained from $G$ by replacing each edge of $G$ by exactly one arc with the same endpoints. Consequently, $D$ is a directed graph without loops, nor parallel, or symmetric arcs. 

Although graph convexity has been more extensively studied than digraph convexity, there is a substantial body of literature dedicated to $\opthree$-convexity \cite{AMM+2026,erdHos1972some,PWW.08}.

In a recent work~\cite{AMM+2026}, the authors define the $\opthrees$-convexity in an oriented graph $D$ in such a way that $S$ is a convex set if for every $u,w\in S$, we have that any vertex $v$ lying in a shortest directed $(u,w)$-path or $(w,u)$-path of length two must also belong to $S$. They focus on the computational complexity of determining some graph convexity parameters in this convexity.

Our goal in this work is to study which are the oriented graphs $D$ such that the $\opthrees$-convexity defined over $V(D)$ is geometric. There are several similar studies for classes of non-oriented graphs. For a recent survey on the topic, see~\cite{DGPST2025}.

\begin{figure}[!ht]
    \centering
    \begin{subfigure}[b]{0.3\textwidth}
    \centering
    \begin{tikzpicture}
        \node (ofour) at (0,-1) {$\oofour$};
    
      \begin{scope}[every node/.style={draw, fill=white, circle, thick, fill = white, inner sep=2pt}]
        \node (x0) at (-1.5,0) [label=above:$x_0$]{};
        \node (x1) at (-0.5,0) [label=above:$x_1$]{};
        \node (x2) at (0.5,0) [label=above:$x_2$]{};
        \node (x3) at (1.5,0) [label=above:$x_3$]{};
      \end{scope}
      
      \begin{scope}[>={stealth[black]},
        % 			  every node/.style={fill=white,circle},
        every edge/.style={draw=black,very thick}]
        \path [->] (x0) edge node  {} (x1);
        \path [->] (x1) edge node  {} (x2);
        \path [->] (x2) edge node  {} (x3);
        \path [->] (x0) edge [bend right=45] node  {} (x3);
      \end{scope}
    \end{tikzpicture}    
    \caption{}
    \label{fig:ofour}
    \end{subfigure}\hfill
    \begin{subfigure}[b]{0.3\textwidth}
    \centering
    \begin{tikzpicture}
        \node (ofour) at (0,-1) {$\opfour$};
    
      \begin{scope}[every node/.style={draw, fill=white, circle, thick, fill = white, inner sep=2pt}]
        \node (x0) at (-1.5,0) [label=above:$x_0$]{};
        \node (x1) at (-0.5,0) [label=above:$x_1$]{};
        \node (x2) at (0.5,0) [label=above:$x_2$]{};
        \node (x3) at (1.5,0) [label=above:$x_3$]{};
      \end{scope}
      
      \begin{scope}[>={stealth[black]},
        % 			  every node/.style={fill=white,circle},
        every edge/.style={draw=black,very thick}]
        \path [->] (x0) edge node  {} (x1);
        \path [->] (x1) edge node  {} (x2);
        \path [->] (x2) edge node  {} (x3);
      \end{scope}
    \end{tikzpicture}
    \caption{}
    \label{fig:pfour}
    \end{subfigure}\hfill
    \begin{subfigure}[b]{0.3\textwidth}
    \centering
   \begin{tikzpicture}
        \node (cn) at (0,-1) {$\overrightarrow{C_n}$ for $n\ge 3$};
        \node (ret) at (0.4,0) {$\ldots$};
    
      \begin{scope}[every node/.style={draw, fill=white, circle, thick, fill = white, inner sep=2pt}]
        \node (x0) at (-1.5,0) [label=above:$v_1$]{};
        \node (x1) at (-0.5,0) [label=above:$v_2$]{};
        
        \node (x3) at (1.5,0) [label=above:$v_n$]{};
      \end{scope}
      
      \begin{scope}[>={stealth[black]},
        % 			  every node/.style={fill=white,circle},
        every edge/.style={draw=black,very thick}]
        \path [->] (x0) edge node  {} (x1);
        \path [->] (x1) edge node  {} (0,0);
        \path [->] (0.9,0) edge node  {} (x3);
        \path [->] (x3) edge [bend left=45] node  {} (x0);
      \end{scope}
    \end{tikzpicture}
    \caption{}
    \label{fig:cn}
    \end{subfigure}
    \caption[]{Some forbidden induced subdigraphs for $\opthrees$-geometries.}
    \label{fig:forbiddenInducedSubgraphs}
  \end{figure}

In this work, we find necessary and sufficient conditions on an oriented graph $D$ 
 such that the $\opthree$-convexity over $D$ is geometric. Subsequently, our work focused on $\opthrees$-convexity. We first present necessary and sufficient conditions on an oriented graph $D$ such that the $\opthrees$-convexity over $D$ is geometric. Namely, we define a family of oriented graphs $\mathcal{H}$ (see Definition \ref{def:familyH}) such that the $\opthrees$-convexity defined over the vertex set of an oriented graph $D$ is geometric if and only if $D$ is a $\{\opfour, \oofour\}$-free directed acyclic graph (DAG) such that for every set $S \subseteq V(D)$, if $D[S] \in \mathcal{H}$, then the convex hull of $S$ is geometric. Then we show that deciding whether a given oriented graph $D$ has a geometric $\opthrees$-convexity is a $\coNP$-complete problem. For last, we
   present a new class of oriented graphs in which deciding if an oriented graph is a geometry with respect
to $\opthrees$-convexity is polynomial.

Section \ref{sec:prelim} contains the necessary definitions and notation used throughout the work. Section \ref{sec:P3-conv}
  provides the characterization of the $\opthree$-geometries. In Section \ref{sec:familyH}, we present the family $\mathcal{H}$ and the characterization of $\opthrees$-geometries. In Section \ref{sec:coNP}, we show our computational complexity hardness result. 
In Section \ref{sec:acycleindif},
  we present the class of acyclic indifference oriented graphs, and we demonstrate that this class is a convex geometry under $\opthrees$-convexity, which is polynomial-time solvable.

Finally, in Section \ref{sec:further}, we present avenues for further research.

\section{Preliminaries}
\label{sec:prelim}

Given a directed $u,v$-path $P=(u=v_1,a_1,\ldots, a_{k-1},v_{k}=v)$ in a digraph $D$, we say that $P$ is \emph{induced} if, for any $i,j\in\{1,\ldots, k\}$ such that $i<j$ and $j\neq i+1$, $(v_i,v_j)\notin A(D)$. Note that, in principle, arcs from $v_j$ to $v_i$ may occur for $j>i$, although in most cases in this work we will treat DAGs. Observe also that a shortest directed $(u,w)$-path of length two is the same as an induced directed path of length two. For every $x,y\in V(D)$, let $\odist_D(x,y)$ denote the minimum length of a directed $(x,y)$-path, when it exists, i.e., the \emph{distance} from $x$ to $y$. It is considered to be $+\infty$ when there is no such directed path. We omit the subscript when $D$ is clear in the context.

Given a digraph $D$ and $S\subseteq V(D)$, we denote by $D[S] = D-(V(D)\setminus S)$ the subdigraph of $D$ induced by $S$. We say that $D$ is \emph{$H$-free}, for some digraph $H$, if $D$ has no induced subdigraph isomorphic to $H$. For a family of graphs $\mathcal{F}$, $D$ is \emph{$\mathcal{F}$-free} if $G$ is $H$-free for every $H\in \mathcal{F}$.

Given an oriented graph $D$, if $(u, v)\in A(D)$, then $u$ is an \emph{in-neighbor} of $v$, while $v$ is an \emph{out-neighbor} of $u$. A vertex $v$ is a \emph{source} in $D$ if it has no in-neighbors in $D$; it is a \emph{sink} in $D$ if it has no out-neighbors in $D$. A vertex $v$ is \emph{transitive} in $D$ if, for any in-neighbor $u$ of $v$ and any out-neighbor $w$ of $v$, we have that $(u,w)\in A(D)$.

\paragraph{Graph Convexity.}  Let $G$ be a graph. Some of the most well-known graph convexities are those defined by interval functions via systems $\cal{P}$ of paths in $G$ \cite{DGPST2025}. 
A \emph{convex} subset $S\subseteq V(G)$ in the $\mathcal{P}$-convexity defined over $V(G)$ is then a fixed point of such an interval function, i.e., $I_{\mathcal{P}}(S) = S$. It is well-known that the \emph{convex hull} of a given subset $S\subseteq V(G)$, denoted here by $hull_{\mathcal{P}}^{G}(S)$, can be obtained by successive compositions of the interval function with itself. If $S$ is a convex set, recall that $v\in S$ is an \emph{extreme} vertex of $S$ if $v\notin hull_{\mathcal{P}}^{G}(S\setminus\{v\})$. 
We denote the set of extreme vertices of the convex set $S\subseteq V(G)$ in the $\mathcal{P}$-convexity by $Ext_{\mathcal{P}}^{G}(S)$. We omit the superscript $G$ when the graph $G$ is clear in the context.
A graph $G$ is called a \emph{$\mathcal{P}$-geometry} (or a \emph{convex geometry}, or even a \emph{geometric convexity}) in the $\mathcal{P}$-convexity if the family of convex subsets of $V(G)$ in the $\mathcal{P}$-convexity defines a geometric convexity over $V(G)$.

\paragraph{$\opthree$-Convexity.}
Let $D$ be an oriented graph. For every $S\subseteq V(D)$, define the \emph{interval} of $S$ in the $\opthree$-convexity as $I_{{P}_3}(S) = S\cup \{v\mid \text{there exist } u,w\in S \text{ such that $v$ lies in any } \text{directed } (u,w)\text{-path of length two}\}$

\begin{proposition}[\cite{AMM+2026}]
  \label{prop:extremep3}
  Let $D$ be an oriented graph. Then, a vertex $x$ is an extreme vertex of $D$ in the $\opthree$-convexity if and only if $x$ is a source or sink vertex in $D$.  
\end{proposition}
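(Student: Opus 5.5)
Both directions follow from unwinding the definitions, with the key observation that in the $\opthree$-convexity only vertices having both an in-neighbor and an out-neighbor can ever enter a hull through the interval operation. Here ``extreme vertex of $D$'' means extreme of the convex set $V(D)$, i.e., $x\notin \hull_{\opthree}(V(D)\setminus\{x\})$.

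\emph{($\Leftarrow$)} Suppose $x$ is a source or a sink. I will show that $C=V(D)\setminus\{x\}$ is already convex, so its hull is $C$ itself and $x$ is extreme. By definition, a vertex outside $C$ is added by the interval function only if it is the central vertex of a directed path $(u,x,w)$ with $u,w\in C$. Such a path requires the arcs $(u,x)$ and $(x,w)$, so $x$ has an in-neighbor and an out-neighbor. This contradicts $x$ being a source or a sink. Hence $I_{P_3}(C)=C$, and $x\notin\hull_{\opthree}(C)$.

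\emph{($\Rightarrow$)} Take the contrapositive. Suppose $x$ is neither a source nor a sink. Then there exist $u$ with $(u,x)\in A(D)$ and $w$ with $(x,w)\in A(D)$. Because $D$ is oriented, it has no loops, so $u\neq x$ and $w\neq x$, and therefore $u,w\in V(D)\setminus\{x\}$. We also have $u\neq w$: if $u=w$, then both $(u,x)$ and $(x,u)$ would be arcs, which is impossible in an oriented graph since symmetric arcs are excluded. So $(u,x,w)$ is a directed path of length two with both endpoints in $V(D)\setminus\{x\}$. By the definition of the interval, $x\in I_{P_3}(V(D)\setminus\{x\})\subseteq \hull_{\opthree}(V(D)\setminus\{x\})$, so $x$ is not extreme.

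There is no genuine obstacle here. The only care point is the use of the oriented-graph hypothesis to ensure $u\neq w$, so that $(u,x,w)$ really is a path on three distinct vertices. The argument also uses only the one-step interval, not iterated hull computations.
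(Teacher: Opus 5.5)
Your proof is correct and complete. The paper gives no proof of its own here, since the proposition is quoted from \cite{AMM+2026}, and your argument is the natural direct one. If $x$ is a source or a sink, then $V(D)\setminus\{x\}$ is already a fixed point of the interval function. Otherwise, an in-neighbor $u$ and an out-neighbor $w$ of $x$, which are distinct because symmetric arcs are excluded, give the directed path $(u,x,w)$, and this puts $x$ into the interval of $V(D)\setminus\{x\}$.

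You correctly read the paper's phrase ``lies in any directed $(u,w)$-path of length two'' as ``lies in some''; this is the reading that matches the definition of convex sets in the abstract. The same argument applied to $D[S]$ also gives the form the paper uses later: the extreme vertices of a convex set $S$ are exactly the sources and sinks of $D[S]$.
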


\paragraph{$\opthrees$-Convexity.} Let $D$ be an oriented graph. For every $S\subseteq V(D)$, define the \emph{interval} of $S$ in the $\opthrees$-convexity as 
$\intfuncops(S) =S\cup \{v\mid$ there exist $u,w\in S$ such that $v$ lies in any shortest directed $(u,w)$-path of length two$\}$. Note that there is no arc $(u,w)$.

\begin{proposition}[\cite{AMM+2026}]
\label{prop:extremep3s}
Let $D$ be an oriented graph. Then, a vertex $x$ is an extreme vertex of $D$ in the $\opthrees$-convexity if and only if $x$ is a source,  sink, or transitive vertex in $D$.
\end{proposition}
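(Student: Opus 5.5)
Both implications follow by unwinding the definition of extreme vertex in the $\opthrees$-convexity. A vertex $x$ is extreme in $D$ exactly when $x\notin \hullops(V(D)\setminus\{x\})$. Since $V(D)$ is convex, the hull of $S=V(D)\setminus\{x\}$ is either $S$ or $V(D)$. The proof therefore reduces to deciding when $x\in \intfuncops(S)$. The key observation is that iterating the interval function is unnecessary here: if $x\notin \intfuncops(S)$, then $\intfuncops(S)=S$, so $S$ is already convex and the hull is $S$. Hence $x$ is extreme if and only if $x\notin\intfuncops(V(D)\setminus\{x\})$.

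By the definition of $\intfuncops$, $x\in\intfuncops(V(D)\setminus\{x\})$ holds exactly when there exist $u,w\neq x$ with $(u,x),(x,w)\in A(D)$ and $(u,w)\notin A(D)$. Thus $x$ is non-extreme exactly when $D$ contains an induced directed path $(u,x,w)$ with $x$ as its middle vertex. Note that $u\neq w$ automatically, since $D$ is oriented and so has no symmetric arcs.

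\textbf{Sufficiency.} Suppose $x$ is a source, a sink, or a transitive vertex. If $x$ is a source it has no in-neighbor $u$, and if $x$ is a sink it has no out-neighbor $w$; in either case no such path exists. If $x$ is transitive, every in-neighbor $u$ and out-neighbor $w$ satisfy $(u,w)\in A(D)$, so every directed path $(u,x,w)$ has the shortcut arc and is not a shortest path. So in all three cases $x$ is extreme.

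\textbf{Necessity.} Suppose $x$ is none of the three. Since $x$ is not a source and not a sink, it has at least one in-neighbor and at least one out-neighbor. Since $x$ is not transitive, some in-neighbor $u$ and out-neighbor $w$ satisfy $(u,w)\notin A(D)$. Then $(u,x,w)$ is an induced directed path of length two with $u,w\in V(D)\setminus\{x\}$, so $x$ lies in the hull of $V(D)\setminus\{x\}$ and is not extreme.

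There is no real obstacle in this argument. The only point needing care is the possibility of an arc $(w,u)$. This does not matter: the definition of $\intfuncops$ only forbids the arc $(u,w)$, so $(u,x,w)$ remains a shortest directed $(u,w)$-path even when $(w,u)\in A(D)$.
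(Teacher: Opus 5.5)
Your proof is correct and complete. The paper does not prove this proposition itself; it is quoted from \cite{AMM+2026}, so there is no in-paper argument to compare against.

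Your route is the natural direct verification. Extremality of $x$ is equivalent to convexity of $V(D)\setminus\{x\}$, and since the two sets differ in one element this reduces to a single application of $\intfuncops$. Your remark that an arc $(w,u)$ does not matter agrees with how the paper treats $\cn{3}$ in Proposition~\ref{pro:small}.

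The same argument works verbatim for an arbitrary convex set $C$ in place of $V(D)$, using $\intfuncops(C\setminus\{x\})\subseteq\intfuncops(C)=C$. It then characterizes extremality in terms of $D[C]$, which is the form in which the paper later applies the proposition.
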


An argument that we repeatedly use in this work is the following:

\begin{observation}
\label{obs:added-non-extreme}
    Given an oriented graph $D$ and a non-empty set of vertices $S\subseteq V(D)$, then any vertex $v\in \hullops(S)\setminus S$ is not an extreme vertex of $\hullops(S)$.
\end{observation}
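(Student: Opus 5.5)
The plan is to show that every vertex added by the hull is produced by some interval step, and therefore lies strictly between two other vertices of $\hullops(S)$.

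Write $H=\hullops(S)$ and recall that $H$ is obtained by iterating the interval function: set $S_0=S$ and $S_{i+1}=\intfuncops(S_i)$, so that $H=S_k$ for the first $k$ with $S_k=S_{k+1}$. Take a vertex $v\in H\setminus S$. Let $i\ge 1$ be the least index with $v\in S_i$. Then $v\in \intfuncops(S_{i-1})\setminus S_{i-1}$. By the definition of the interval, there are $u,w\in S_{i-1}$ such that $(u,v,w)$ is a directed path and $(u,w)\notin A(D)$. This means $(u,v,w)$ is a shortest directed $(u,w)$-path of length two. Since the $S_j$ form a chain, $u,w\in H$, and moreover $u,w\neq v$.

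Next I show $v$ is not extreme in $H$. It is enough to prove $v\in\hullops(H\setminus\{v\})$. We have $u,w\in H\setminus\{v\}$, and the path $(u,v,w)$ with $(u,w)\notin A(D)$ gives $v\in\intfuncops(\{u,w\})\subseteq \intfuncops(H\setminus\{v\})\subseteq\hullops(H\setminus\{v\})$. Here I use that $\intfuncops$ is monotone, which follows directly from its definition. So $v$ is not an extreme vertex of $H$.

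An alternative, in case one wants to avoid the iteration, is to go through Proposition~\ref{prop:extremep3s} applied to $D[H]$. Since $H$ is convex, the vertices $u$, $w$ and the path $(u,v,w)$ lie inside $D[H]$. The arc $(u,v)$ shows $v$ is not a source, the arc $(v,w)$ shows it is not a sink, and $(u,w)\notin A$ shows it is not transitive. This route requires checking that extremality in $H$ agrees with extremality in $D[H]$, which is why I prefer the direct argument above.

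The only delicate point is making sure that the witnesses $u,w$ are distinct from $v$ and survive in $H\setminus\{v\}$. This holds because they already belong to $S_{i-1}$, which does not contain $v$. The non-emptiness of $S$ plays no essential role; it merely excludes the trivial case $H=\emptyset$.
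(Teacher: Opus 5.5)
Your proof is correct, and its first half is the same as the paper's: both use the iterative computation of the hull to get witnesses $u,w$, added before $v$, with $(u,v,w)$ a directed path and $(u,w)\notin A(D)$. The two arguments differ only at the last step.

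The paper concludes by citing Proposition~\ref{prop:extremep3s}: $v$ is not a source, not a sink and not transitive. That is a one-liner once the characterization is available. However, it implicitly applies a statement about the whole vertex set of an oriented graph to the convex set $H$, that is, to $D[H]$. This is exactly the transfer you point out.

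You instead argue directly from the definition of an extreme element, using monotonicity of $\intfuncops$. This needs neither the characterization nor the transfer.

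Your direct route can be shortened further, because the witnesses are not needed at all. Since $S\subseteq H\setminus\{v\}$ and the hull is monotone, $v\in H=\hullops(S)\subseteq\hullops(H\setminus\{v\})$ immediately. So the observation holds in every convexity space: the extreme elements of the hull of $S$ always lie in $S$, independently of the $\opthrees$-structure. The paper's argument instead ties the fact to the specific description of extreme vertices in this convexity.
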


Indeed, note that if $v\in  \hullops(S)\setminus S$, if one thinks of the $\hullops(S)$ being computed by successively composing the interval function $\intfuncops(S)$, it means that $v$ is added to $\hullops(S)$ thanks to the existence of a shortest directed $(u,w)$-path of length two whose internal vertex is $v$, for some vertices $u,w$ added to $\hullops(S)$ before $v$. Consequently, $v$ cannot be extreme by Proposition \ref{prop:extremep3s}. 

Another useful argument deals with subdigraphs and convex sets.

\begin{observation}
\label{obs:deleteArc}
Let $S$ be a convex set, in the $\opthrees$-convexity, of an oriented graph $D$. If the arc $(u,v) \in A(D)$ has at most one vertex of $S$, then $S$ is convex in $D-(u,v)$. 
\end{observation}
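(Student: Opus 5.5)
Observation~\ref{obs:deleteArc} follows directly from the definition of convexity in the $\opthrees$-convexity. We check that no vertex outside $S$ becomes the middle of a violating path once the arc $(u,v)$ is removed. Let $D'=D-(u,v)$ and suppose, for contradiction, that $S$ is not convex in $D'$. Then there exist $x,z\in S$ and $y\in V(D)\setminus S$ such that $(x,y,z)$ is a directed path in $D'$ and $(x,z)\notin A(D')$.

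First, the arcs $(x,y)$ and $(y,z)$ lie in $A(D')\subseteq A(D)$, so $(x,y,z)$ is also a directed path in $D$. Second, we show that $(x,z)\notin A(D)$. Since $A(D)=A(D')\cup\{(u,v)\}$, the only way $(x,z)$ could be an arc of $D$ is if $(x,z)=(u,v)$. That would force both $u$ and $v$ to lie in $S$, contradicting the hypothesis that the arc $(u,v)$ has at most one endpoint in $S$. Hence $(x,z)\notin A(D)$, so $(x,y,z)$ is a shortest directed $(x,z)$-path of length two in $D$ whose ends are in $S$ and whose central vertex $y$ is outside $S$. This contradicts the convexity of $S$ in $D$.

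The only substantive step is the second one. Deleting an arc can create new induced paths of length two, because a path $(x,y,z)$ whose "shortcut" $(x,z)$ was present in $D$ becomes induced once that shortcut is removed. The hypothesis rules this out precisely, since a shortcut relevant to convexity must have both endpoints in $S$. Deleting an arc cannot create new directed paths, so no other case arises.
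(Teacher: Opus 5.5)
Your proof is correct and complete. The paper states this observation without proof, treating it as immediate from the definition, and your argument is exactly the routine verification it leaves implicit: a violating path $(x,y,z)$ in $D-(u,v)$ uses only arcs of $D$, and the missing arc $(x,z)$ cannot be $(u,v)$ because $x,z\in S$, so the same path would already violate convexity of $S$ in $D$.
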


Let $\on{n}$ be the oriented graph whose set of vertices is
$\{x_i\mid i\in\{0,\ldots, n-1\}\}$ and the set of arcs is $\{(x_i,x_{i+1})\mid i\in\{0,\ldots, n-2\}\cup \{(x_0,x_{n-1})\}$. We say that $x_0$ and $x_{n-1}$ are the \emph{endpoints} of $\on{n}$. We denote by $\cn{n}$ the directed cycle on $n$ vertices and by $\pn{n}$ the directed path on $n$ vertices. See Figure \ref{fig:forbiddenInducedSubgraphs}.

\begin{proposition} \label{pro:small} 
The oriented graphs $\pn{n}$ and $\on{n}$, for every $n\geq 4$, and the oriented graphs $\cn{n}$, for any integer $n\geq 3$, are not $\opthrees$-geometry.
\end{proposition}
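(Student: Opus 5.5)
In each case I will exhibit a convex set $C$ in the $\opthrees$-convexity whose extreme vertices do not generate it, that is, $\hullops(\ext(C))\neq C$. The natural candidate is $C=V(D)$. By Proposition~\ref{prop:extremep3s}, the extreme vertices of $V(D)$ are exactly its sources, sinks and transitive vertices. So it suffices to find these vertices and check that their hull misses some vertex.

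\textbf{Directed cycles.} In $\cn{n}$ with $n\geq 3$, every vertex has exactly one in-neighbor and one out-neighbor. There are therefore no sources and no sinks. A vertex $v_i$ would be transitive only if $(v_{i-1},v_{i+1})\in A$. For $n\geq 4$ this arc is absent. For $n=3$ the only arc between these two vertices is $(v_{i+1},v_{i-1})$, which has the wrong direction. Hence $\ext(V)=\emptyset$, whose hull is $\emptyset\neq V$, and $\cn{n}$ is not a geometry.

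\textbf{Directed paths.} In $\pn{n}$ with $n\ge4$, write the vertices as $x_0,\dots,x_{n-1}$. The only source is $x_0$ and the only sink is $x_{n-1}$. No internal vertex is transitive, because $(x_{i-1},x_{i+1})\notin A$. Thus $\ext(V)=\{x_0,x_{n-1}\}$. Since $n\ge4$, $\odist(x_0,x_{n-1})\geq 3$, so no shortest directed path of length two joins these vertices in either direction. The set $\{x_0,x_{n-1}\}$ is therefore already convex, its hull is not $V$, and $\pn{n}$ is not a geometry.

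\textbf{Graphs $\on{n}$.} In $\on{n}$ with $n\geq 4$, the source is $x_0$ and the sink is $x_{n-1}$. The internal vertices are not transitive, since the arcs $(x_{i-1},x_{i+1})$ are absent; for $i=1$ and $i=n-2$ this uses $n\ge4$. So again $\ext(V)=\{x_0,x_{n-1}\}$. These two vertices are joined by the arc $(x_0,x_{n-1})$, so $\intfuncops(\{x_0,x_{n-1}\})=\{x_0,x_{n-1}\}$, which is convex and different from $V$.

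The only step needing care is the transitivity check at the internal vertices next to the endpoints. For $\on{n}$ with $n=4$, the vertex $x_1$ has in-neighbor $x_0$ and out-neighbor $x_2$, and $(x_0,x_2)$ is not an arc. The same holds symmetrically for $x_2$. So the argument above goes through, and apart from this check everything is routine.
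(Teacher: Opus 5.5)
Your proof is correct and follows essentially the same approach as the paper. You take $C=V(D)$ and read off its extreme vertices via Proposition~\ref{prop:extremep3s}: there are none for $\cn{n}$, and only $x_0,x_{n-1}$ for $\pn{n}$ and $\on{n}$. You then note that $\{x_0,x_{n-1}\}$ is already convex, and in addition you spell out the transitivity checks that the paper leaves implicit (the case $n=3$ for the cycle, and the vertices next to the endpoints for $\on{n}$).
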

\begin{proof}
    Note that $V(\cn{n})$ is a convex non-empty set of $\cn{n}$, since $n\geq 3$, but it has no extreme points as presented in Proposition \ref{prop:extremep3s}, and thus it cannot be the convex hull of its extreme points. With respect to $\on{n}$ and $\pn{n}$, note that in both cases their endpoints $x_0$ and $x_{n-1}$ are the extreme vertices of the convex set composed by all vertices, but $\hullops(\{x_0,x_{n-1}\}) = \{x_0,x_{n-1}\}$.
\end{proof}

%%%%%%%%%%%%%%%%%%%%%%%%%%%%%%%
% New P3 convexity section
%%%%%%%%%%%%%%%%%%%%%%%%%%%%%%%

\section{\texorpdfstring{$\opthree$}{P3}-convexity}
\label{sec:P3-conv}

We start our main contribution by presenting a characterization of convex geometries in the $\opthree$-convexity that naturally leads to a polynomial-time algorithm to recognize such graphs.

\begin{theorem}\label{thm:P3-convexity}
An oriented graph $D$ is a convex geometry in the $\opthree$-convexity if and only if all the following conditions are valid:
\begin{enumerate}
\item\label{item:acyclic} $D$ is acyclic;
\item\label{item:dist2} for all distinct $x,y \in V(D)$ such that $y$ is a descendant of $x$, we have $\odist(x,y)\leq 2$; and
\item\label{item:noinducedO4} if $D$ has an induced subdigraph isomorphic to an $\oofour$ with vertex set $\{a,b,c,d\}$, being $a$ and $d$ its endpoints, then $\{b,c\}\subseteq \phullop{D}(\{a,d\})$.
\end{enumerate}
\end{theorem}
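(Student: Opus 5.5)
We prove both directions, using Proposition~\ref{prop:extremep3} applied to induced subdigraphs. The key point is that for a convex set $C$, the $\opthree$-convex sets of $D$ contained in $C$ are exactly the $\opthree$-convex sets of $D[C]$, and hulls of subsets of $C$ agree in $D$ and $D[C]$. Hence the extreme vertices of $C$ are exactly the sources and sinks of $D[C]$. So $D$ is a geometry if and only if, for every convex $C$, $C=\phullop{D}(\text{sources and sinks of } D[C])$.

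\emph{Necessity.} Suppose condition~\ref{item:acyclic} fails. Take a shortest directed cycle; it is a directed cycle $C'$, and the hull of $C'$ is a convex set. I would argue that this hull contains a nonempty convex set with no source or sink. The natural candidate is $V(D)$ restricted appropriately, or more simply the hull $H$ of the cycle. In $D[H]$ every vertex of $H$ either lies on the cycle, and so has an in- and an out-neighbour there, or was added as the middle vertex of some $(u,v,w)$ with $u,w\in H$. So $D[H]$ has no sources or sinks. It has no extreme vertices, yet it is nonempty, which is a contradiction.

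Suppose condition~\ref{item:dist2} fails. Choose $x,y$ with $\odist(x,y)=3$, with $D$ acyclic, via a shortest path $x,a,b,y$; by minimality this path is induced up to the arc $(x,y)$ being absent. Let $C$ be the hull of $\{x,a,b,y\}$. I claim $x$ and $y$ are the only extremes of $C$. Every added vertex is a middle vertex, and $a,b$ are middle vertices on the path. The hull of $\{x,y\}$ is just $\{x,y\}$, because there is no directed path of length $2$ between them and they are not adjacent. So $C$ is not geometric.

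Suppose condition~\ref{item:noinducedO4} fails. Take $C=\phullop{D}(\{a,b,c,d\})$. Its extremes are among $\{a,d\}$ plus possibly others, but all other vertices are middles, so the extremes are exactly $a$ and $d$, which are the source and sink of $C$. Since $\{b,c\}\not\subseteq\phullop{D}(\{a,d\})$, the set $C$ is not geometric.

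\emph{Sufficiency.} Let $C$ be convex and let $E$ be its set of sources and sinks in $D[C]$, which is nonempty and covers all maximal paths since $D$ is acyclic. Set $H=\phullop{D}(E)\subseteq C$. Take $v\in C$. By acyclicity, $v$ lies on a directed path in $D[C]$ from some source $s\in E$ to some sink $t\in E$. By condition~\ref{item:dist2}, $\odist(s,t)\le 2$.

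If $\odist(s,t)=2$ via $v$, or via some other middle vertex, we need $v$ itself. Here I would pick the path through $v$: we have $s\to\cdots\to v\to\cdots\to t$. Using condition~\ref{item:dist2} on $(s,v)$ and $(v,t)$, and the fact that $C$ is convex (so shortcut vertices stay in $C$), we may assume $\odist(s,v)\le2$ and $\odist(v,t)\le 2$. The cases are then:
\begin{itemize}
\item $s\to v\to t$: then $v\in H$ directly.
\item $s\to u\to v\to t$ with $(s,t)$ an arc: here $\{s,u,v,t\}$ induces an $\oofour$ or contains extra chords. If it is an $\oofour$, condition~\ref{item:noinducedO4} gives $v\in H$. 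If there is a chord $(s,v)$ or $(u,t)$, then $v$ or $u$ becomes a middle vertex between $s$ and $t$, and we iterate.
\end{itemize}

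The main obstacle is the case of length $4$, namely $s\to u\to v\to w\to t$. I would do induction on the length of the longest path in $D[C]$ through $v$. The idea is to use condition~\ref{item:dist2} to produce chords that place a vertex adjacent to $v$ into $H$ first, and then treat that vertex as a new endpoint. Since $H$ is convex, the same argument applies with an already-hulled vertex replacing $s$ or $t$.
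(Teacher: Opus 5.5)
Your necessity direction is sound and essentially matches the paper's. The gap is in sufficiency, and it is more than an unfinished length-$4$ case. Your case analysis never treats a path $s\to u\to v\to t$ with $(s,t)\notin A(D)$ and neither chord $(s,v)$ nor $(u,t)$, i.e.\ $\{s,u,v,t\}$ inducing $\opfour$. In that case Condition~2 only produces some other vertex $e$ with $s\to e\to t$. This puts $e$ into $H$, not $u$ or $v$, and Condition~3 says nothing about $\opfour$.

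This case cannot be closed. Take the digraph with arcs $(a,b),(b,c),(c,d),(a,e),(e,d)$. It is acyclic, and every descendant of every vertex is at distance at most $2$. Its only directed path on four vertices is $(a,b,c,d)$ and $(a,d)\notin A(D)$, so it has no induced $\oofour$. Hence Conditions 1--3 all hold. Yet $V(D)$ is convex, its extremes are exactly the source $a$ and the sink $d$, and $\phullop{D}(\{a,d\})=\{a,e,d\}\neq V(D)$. So the statement as written is false, and no completion of your sufficiency argument exists.

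What the paper's proof actually establishes is the version where Condition~3 is required for every induced $\opfour$ \emph{or} $\oofour$ on $(a,b,c,d)$. Its necessity part is explicitly carried out for both shapes. Your argument extends verbatim, since $b,c$ remain middle vertices, so the extremes of $\phullop{D}(\{a,b,c,d\})$ lie in $\{a,d\}$. Its sufficiency part ends at a quadruple inducing ``$\opfour$ or $\oofour$''.

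With that corrected condition, the paper also avoids your induction on path length by a single first-exit step:
\begin{enumerate}
\item On a maximal path through $v$ in $D[C]$ from a source $s$ to a sink $t$, let $(a,b)$ be the first arc with $a\in H$ and $b\notin H$.
\item Then $(b,t)\notin A(D)$, else $b\in H$ via $(a,b,t)$. So Condition~2 gives a path $(b,c,t)$, with $c\in C$ by convexity.
\item Next, $c\notin H$, else $b\in H$ via $(a,b,c)$; and $(a,c)\notin A(D)$, else $c\in H$ via $(a,c,t)$.
\item Backward arcs are excluded by acyclicity, so $\{a,b,c,t\}$ induces $\opfour$ or $\oofour$ with both endpoints in $H$.
\item The strengthened Condition~3 then forces $b\in \phullop{D}(\{a,t\})\subseteq H$, a contradiction.
\end{enumerate}
This is the precise form of your ``treat an already-hulled vertex as the new endpoint'' idea, and it handles all path lengths at once.
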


\begin{proof}
    Assume first that $D$ is a convex geometry in the $\opthree$-convexity. If $D$ contains an oriented cycle $C$, then $\phullop{D}(V(C))$ is a convex set with no extreme vertices, thanks to Proposition \ref{prop:extremep3} and to Observation \ref{obs:added-non-extreme}, a contradiction. Thus, Condition~\ref{item:acyclic} is valid.

    Suppose now that $\odist(x,y)=k>2$, for some $x,y\in V(D)$ such that $y$ is a descendant of $x$. Let $P$ be a directed path from $x$ to $y$ of length $k$ in $D$. Since $D$ is acyclic and $P$ is a shortest directed $(x,y)$-path, note that $P$ is an induced subdigraph of $D$ isomorphic to $\pn{k}$, $k\geq 4$. Then $X=\phullop{D}(V(P))$ is a convex set such that $\{x,y\} \subseteq \ext(X)$, but $X \neq \phullop{D}(\{x,y\})=\{x,y\}$, a contradiction. Thus, Condition~\ref{item:dist2} is valid.

    Finally, suppose that $R=(a,b,c,d)$ is a directed path such that the subdigraph $H$ of $D$ induced by $V(R)$ is isomorphic to $\opfour$ or $\oofour$. Let $Y=\phullop{D}(V(R))$. Note that $Y$ is a convex set, $\{a,b\} = \ext(Y)$ (recall Observation \ref{obs:added-non-extreme}), and $\{b,c\}\subseteq Y$. Since $D$ is a convex geometry, we have that $Y=\phullop{D}(\{a,d\})$. Thus, Condition~\ref{item:noinducedO4} is valid.
    
     Conversely, suppose that Conditions \ref{item:acyclic}, \ref{item:dist2}, and \ref{item:noinducedO4} are valid and $D$ is not a convex geometry, that is, $D$ has an induced subdigraph $G$ such that $V(G)$ is a convex set in the $\opthree$-convexity defined over $D$ which is not geometric. Then, there exists $x\in V(G)$ such that $x\not\in Y=\phullop{G}(\pextp{G}(V(G))=\phullop{D}(\pextp{G}(V(G))$. 

    By Condition \ref{item:acyclic}, note that $G$ is acyclic. Observe that $x$ is not an extreme vertex of $G$ and thus $x$ is not a source nor a sink vertex of $G$, thanks to Proposition \ref{prop:extremep3}. Thus, let $R$ be a maximal directed path in $G$ containing $x$ as one of its inner vertices. It is clear that $R$ starts at a source $s$ and ends at a sink $t$ of $G$, since $G$ is acyclic. Also, observe that $s,t \in \pextp{G}(V(G))$. As $s\in Y$ and $x\notin Y$, let $a$ and $b$ be the first pair of consecutive vertices of $R$ such that $a\in Y$ and $b\not\in Y$. Let $d=t$. If $\odist(b,d)=1$, then $b$ would belong to $Y$, a contradiction. Thus, by Condition~\ref{item:dist2}, $\odist(b,d)=2$, and this implies that there exists $c\in V(G)$ such that $(b,c,d)$ is an induced directed path. Since $b\not\in Y$,  it follows that $c\not\in Y$. Also, $\odist(a,c)=2$, otherwise $c$ would belong to $Y$. We conclude that  $R'=(a,b,c,d)$ is a directed path such that the subdigraph of $G$ induced by $V(R')$ is isomorphic to $\opfour$ or $\oofour$. However, Condition~\ref{item:noinducedO4} tells us that $\{b,c\}\subseteq \phullop{D}(\{a,d\})\subseteq Y$, a contradition. Therefore, $V(G)$ is geometric and $D$ is a convex geometry.
    
\end{proof}

Since Conditions \ref{item:acyclic}, \ref{item:dist2}, and \ref{item:noinducedO4} can be easily checked in polynomial time, the corollary below is straightforward.

\begin{corollary}
  \label{cor:p3inP}
Let $D$ be an oriented graph. Then, deciding whether $D$ is a convex geometry in the $\opthree$-convexity can be done in polynomial time.  
\end{corollary}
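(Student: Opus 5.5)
The corollary follows once we know that each of the three conditions of Theorem~\ref{thm:P3-convexity} can be checked in time polynomial in $n=|V(D)|$. The algorithm tests the conditions one after another and answers ``yes'' exactly when all three hold; its correctness is then Theorem~\ref{thm:P3-convexity} itself.

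Condition~\ref{item:acyclic} is tested by a depth-first search or a topological sort, in $O(n+|A(D)|)$ time. Condition~\ref{item:dist2} is tested by running a breadth-first search from every vertex $x$. This computes $\odist(x,y)$ for every $y$, and we reject if some reachable $y\neq x$ has $\odist(x,y)\geq 3$. The total cost is $O(n(n+|A(D)|))$.

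For Condition~\ref{item:noinducedO4}, we enumerate all ordered quadruples $(a,b,c,d)$ of distinct vertices, $O(n^4)$ of them. For each, we check in constant time with an adjacency matrix whether $D[\{a,b,c,d\}]$ is isomorphic to $\oofour$ with endpoints $a$ and $d$, that is, whether it has exactly the arcs $(a,b),(b,c),(c,d),(a,d)$. For each such quadruple we compute $\phullop{D}(\{a,d\})$ by iterating the interval function $I_{P_3}$ until it stabilizes. Each iteration enlarges the current set or stops, so there are at most $n$ iterations. One iteration scans all pairs $u,w$ in the current set together with all candidate middle vertices, which costs $O(n^3)$. Hence one hull is computed in $O(n^4)$ time. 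Since the hull depends only on the pair $\{a,d\}$, we can precompute it for all $O(n^2)$ pairs, and then each quadruple needs only a membership test for $b$ and $c$. The whole check is polynomial, roughly $O(n^6)$.

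No step is a real obstacle. The only point needing care is that computing the hull by iterating $I_{P_3}$ really terminates in polynomially many rounds, which holds because the sets increase monotonically within $V(D)$. Combining the three polynomial tests with Theorem~\ref{thm:P3-convexity} proves the corollary.
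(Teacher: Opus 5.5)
Your route is the paper's: you read the corollary off from Theorem~\ref{thm:P3-convexity} by brute-forcing its three conditions. Precomputing $\phullop{D}(\{a,d\})$ once per pair even improves the paper's $\mathcal{O}(n^7)$ estimate to $\mathcal{O}(n^6)$.

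There is, however, a genuine gap in your test for Condition~\ref{item:noinducedO4}, inherited from how that condition is stated. You only examine quadruples inducing $\oofour$, i.e.\ with induced arc set exactly $\{(a,b),(b,c),(c,d),(a,d)\}$, and this does not suffice. Take $D$ on $\{a,b,c,d,e\}$ with arcs $(a,b),(b,c),(c,d),(a,e),(e,d)$. It is acyclic. Every vertex reachable from a vertex $x$ lies at distance at most $2$ from $x$; in particular $\odist(a,d)=2$ via $e$. It has no induced $\oofour$, since the only directed path on four vertices is $(a,b,c,d)$ and $(a,d)\notin A(D)$. So your algorithm accepts $D$. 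But $V(D)$ is convex, and its extreme vertices are the unique source $a$ and unique sink $d$ (Proposition~\ref{prop:extremep3}). Moreover $\phullop{D}(\{a,d\})=\{a,d,e\}$ misses $b$ and $c$, so $D$ is not a convex geometry.

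The repair is to run the test on every ordered quadruple $(a,b,c,d)$ whose induced arc set is $\{(a,b),(b,c),(c,d)\}$ or $\{(a,b),(b,c),(c,d),(a,d)\}$, that is, which induces $\opfour$ or $\oofour$. In both cases require $\{b,c\}\subseteq\phullop{D}(\{a,d\})$. This is the condition the paper's proof of Theorem~\ref{thm:P3-convexity} actually works with. Its necessity part derives it for ``$\opfour$ or $\oofour$''. Its sufficiency part produces a quadruple $(a,b,c,d)$ with $a,d\in Y$ and $b,c\notin Y$ that may induce either graph. So the characterization is valid with the amended condition, and false without it by the example above.

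The extra check costs one more constant-time adjacency test per quadruple, so your $\mathcal{O}(n^6)$ bound, and hence the corollary, survive. The paper's own brute-force remark also searches only for induced $\oofour$, so it has the same defect.
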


Of course, a brute-force algorithm implicitly suggested by Corollary \ref{cor:p3inP} would take, in a worst case scenario, $\mathcal{O}(n^7)$-time to try all subsets of four vertices to look for an induced $\oofour$. For each one, run a cubic algorithm to compute the convex hull of its endpoints and verify the conditions. A natural question is whether one can do better than this brute-force algorithm. We leave this as an open problem in Section \ref{sec:further}.

\section[convexity]{$\opthrees$-convexity}

In this section, we focus on the contributions for $\pthrees$-convexity. We first present in Section \ref{sec:familyH} a characterization of oriented graphs that define a convex geometry; however, in this case it will not provide a polynomial-time recognition algorithm for the graphs in this class.

In fact, in Section \ref{sec:coNP} we show that it is \coNP-complete to determine whether a given oriented graph $D$ defines a geometric convexity in the $\opthrees$-convexity.

Finally, in Section \ref{sec:acycleindif}, we present a particular family that allows polynomial-time recognition.

\subsection{Characterizing the oriented geometric graphs} \label{sec:familyH}

An oriented graph $D$ is \emph{$H$-free}, given the oriented graph $H$, if $H$ is not an induced subdigraph of $D$. For a family of oriented graphs $\mathcal{F}$, $D$ is \emph{$\mathcal{F}$-free} if $D$ is $H$-free for every $H\in \mathcal{F}$. Recall that DAG is an abbreviation for \emph{directed acyclic graph}.

In the sequel, we define the class of oriented graphs that we prove in this section to be the class of $\opthrees$-geometries.

Let $u,v$ be vertices of a DAG $D$. If there is a $(u,v)$-path in $D$, then we say that $u$ is an \emph{ancestor} of $v$ and that $v$ is a {\em descendant} of $u$ in $D$.

\begin{definition}
	\label{def:familyH}
	We say that $H$ belongs to class $\mathcal{H}$ if $H$ is a vertex minimal DAG that is $\{\opfour, \oofour\}$-free such that $V(H)$ can be partitioned into three sets $B, C_{\ell}$ and $C_r$ such that there is a circular ordering $c^1, \ldots, c^{\ell'+r'}$, where $\ell' = |C_{\ell}|$ and $r' = |C_r|$, of the vertices of $C_{\ell} \cup C_r$ satisfying:

\begin{enumerate}
	\item $\min\{|B|, \ell', r'\} \ge 2$; \label{ite:t}

	\item $|B| \le \ell' + r'$; \label{ite:non-empty}	
	
	\item no vertex of $C_{\ell} \cup C_r$ is the internal vertex of an induced path with both endpoints in $B$; \label{ite:Cconcave}

   	\item for every positive $i \le \ell' + r'$, if $c^i \in C_\ell$, then there is $b \in B$ such that $(b,c^i,c^{i+1})$ is an induced path; and \label{ite:Clnoextreme}

	\item for every positive $i \le \ell' + r'$, if $c^i \in C_r$, then there is $b \in B$ such that $(c^{i+1},c^i,b)$ is an induced path. \label{ite:Crnoextreme}
\end{enumerate}
\end{definition}
In Definition \ref{def:familyH}, we emphasize that $c^{i+1} = c^1$ when $i = \ell'+r'$. This is what we mean by circular ordering of $c^1,\ldots, c^{\ell'+r'}$.

\begin{lemma} \label{lem:cons}
Consider an $H \in \mathcal{H}$ with partition $(B, C_{\ell}, C_r)$ of $V(H)$ satisfying Definition \ref{def:familyH}. Then
	
	\begin{enumerate}[label=\roman*)]

		\item For every positive $i \le \ell' + r'$, if $c^i \in C_\ell$, then $(c^i,c^{i+1}) \in A(H)$; otherwise, $(c^{i+1},c^i) \in A(H)$. \label{ite:lrcycle}

		\item No vertex of $C_{\ell} \cup C_r$ is extreme. \label{ite:Cnoextreme}
		
		\item For any $c \in C_\ell \cup C_r$, it holds $V(H) = \phullops{H}(B \cup \{c\})$. \label{ite:one}	
	\end{enumerate}	
	
\end{lemma}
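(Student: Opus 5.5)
Item \ref{ite:lrcycle} follows directly from conditions \ref{ite:Clnoextreme} and \ref{ite:Crnoextreme} of Definition \ref{def:familyH}. If $c^i\in C_\ell$, condition \ref{ite:Clnoextreme} provides $b\in B$ such that $(b,c^i,c^{i+1})$ is an induced directed path, so in particular $(c^i,c^{i+1})\in A(H)$. If $c^i\in C_r$, condition \ref{ite:Crnoextreme} provides $b\in B$ such that $(c^{i+1},c^i,b)$ is an induced directed path, so $(c^{i+1},c^i)\in A(H)$. Since $H$ is an oriented graph, exactly one orientation occurs in each case, which is what the statement claims.

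For item \ref{ite:Cnoextreme} I plan to use Proposition \ref{prop:extremep3s}: a vertex is extreme in $H$ if and only if it is a source, a sink, or transitive. Take $c^i\in C_\ell$ with witness $b$. Then $b$ is an in-neighbor of $c^i$, so $c^i$ is not a source, and $c^{i+1}$ is an out-neighbor, so $c^i$ is not a sink. Moreover $(b,c^{i+1})\notin A(H)$ because the path $(b,c^i,c^{i+1})$ is induced, so $c^i$ is not transitive. The case $c^i\in C_r$ is symmetric, using the path $(c^{i+1},c^i,b)$. Hence $c^i$ is not extreme.

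For item \ref{ite:one} I will show that adding a single vertex $c=c^j$ to $B$ propagates around the circular ordering. Let $X=\phullops{H}(B\cup\{c\})$. I claim that if $c^i\in X$, then $c^{i-1}\in X$ (indices taken circularly). If $c^{i-1}\in C_\ell$, condition \ref{ite:Clnoextreme} gives $b\in B$ with $(b,c^{i-1},c^i)$ an induced path. Since $b,c^i\in X$ and $(b,c^i)\notin A(H)$, this is a shortest directed $(b,c^i)$-path of length two, so $c^{i-1}\in \intfuncops(X)=X$. If $c^{i-1}\in C_r$, condition \ref{ite:Crnoextreme} gives the induced path $(c^i,c^{i-1},b)$, and the same argument puts $c^{i-1}$ in $X$. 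Starting from $c^j$ and applying the claim $\ell'+r'-1$ times walks backwards through the whole circular ordering. Hence $C_\ell\cup C_r\subseteq X$, and together with $B\subseteq X$ this gives $X=V(H)$.

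The main point requiring care is the direction of propagation. In both conditions \ref{ite:Clnoextreme} and \ref{ite:Crnoextreme}, the vertex $c^i$ is the middle vertex of a path whose other endpoints are $b$ and $c^{i+1}$. Membership of $c^{i+1}$ therefore forces $c^i$, so the induction runs backwards along the ordering. The circularity of the ordering is what guarantees that starting from an arbitrary $c$ still reaches every vertex. None of the other conditions of Definition \ref{def:familyH} are needed here.
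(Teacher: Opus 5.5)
Your proof is correct and follows the paper's argument. For items i) and ii) you read the arcs and the non-extremality off conditions \ref{ite:Clnoextreme} and \ref{ite:Crnoextreme} of Definition~\ref{def:familyH}, spelling out the source/sink/transitive check via Proposition~\ref{prop:extremep3s} that the paper leaves implicit; for item iii) you show, exactly as the paper does, that $c^i$ in the hull forces its predecessor $c^{i-1}$ into the hull, and then walk backwards around the circular ordering.
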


\begin{proof}
	\noindent Item \ref{ite:lrcycle} and Item \ref{ite:Cnoextreme} follow directly from the conditions in Definition \ref{def:familyH}, namely Item \ref{ite:Clnoextreme} and Item \ref{ite:Crnoextreme}.
	To prove Item \ref{ite:one}, it suffices to show that for any $i \in [\ell' + r']$, the predecessor of $c^i$ in the given circular ordering belongs to $\phullops{H}(B \cup \{c^i\})$.	
	First, consider that $c^{i-1} \in C_\ell$.
	Since $(b,c^{i-1}, c^i)$ is an induced path, for some $b \in B$, we have that $c^{i-1} \in \phullops{H}(B \cup \{c^i\})$.
	Now, consider that $c^{i-1} \in C_r$.
Since $(c^i,c^{i-1},b')$ is an induced path for some $b' \in B$, we have that $c^{i-1} \in \phullops{H}(B \cup \{c^i\})$.
\end{proof}

\begin{proposition}
	Every oriented graph of $\mathcal{H}$ is not a convex geometry in the $\opthrees$-convexity.    
\end{proposition}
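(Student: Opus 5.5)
We take the whole vertex set $V(H)$ as the witness convex set; it is trivially convex. The plan is to show that $\phullops{H}(\extps(V(H)))\neq V(H)$, which is exactly the statement that $V(H)$ is not geometric. By Lemma~\ref{lem:cons}\ref{ite:Cnoextreme}, no vertex of $C_\ell\cup C_r$ is extreme, so $\extps(V(H))\subseteq B$. It therefore suffices to prove $\phullops{H}(B)\neq V(H)$, since the hull of any subset of $B$ is contained in $\phullops{H}(B)$.

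Condition~\ref{ite:Cconcave} of Definition~\ref{def:familyH} says that no vertex of $C_\ell\cup C_r$ is the central vertex of an induced directed path of length two with both endpoints in $B$. Hence $\intfuncops(B)=B$, so $B$ is itself convex and $\phullops{H}(B)=B$. Condition~\ref{ite:t} gives $\ell'+r'\ge 4>0$, so $C_\ell\cup C_r\neq\emptyset$ and $B\subsetneq V(H)$. Consequently $\phullops{H}(\extps(V(H)))\subseteq B\subsetneq V(H)$, so $V(H)$ is a convex set that is not the hull of its extreme vertices, and $H$ is not a convex geometry.

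The argument is short. The only point to get right is the reduction from ``extreme vertices'' to ``a subset of $B$'', and this is handled by Lemma~\ref{lem:cons}\ref{ite:Cnoextreme}, which rests on items~\ref{ite:Clnoextreme} and~\ref{ite:Crnoextreme} together with Proposition~\ref{prop:extremep3s}. To double-check it: a vertex $c^i\in C_\ell$ lies on an induced path $(b,c^i,c^{i+1})$, so it has an in-neighbour and an out-neighbour that are non-adjacent. Therefore $c^i$ is neither a source, nor a sink, nor transitive. The case $c^i\in C_r$ is symmetric.
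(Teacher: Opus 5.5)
Your proof is correct and follows the paper's argument. Both take $V(H)$ as the convex set, use items~\ref{ite:Clnoextreme} and~\ref{ite:Crnoextreme} (equivalently Lemma~\ref{lem:cons}\ref{ite:Cnoextreme}) to confine the extreme vertices to $B$, and use item~\ref{ite:Cconcave} to get $\phullops{H}(B)=B\subsetneq V(H)$. Your explicit monotonicity remark, that the hull of a subset of $B$ stays inside $\phullops{H}(B)$, fills in a step the paper leaves implicit.
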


\begin{proof}
	Let $H$ be any member of $\mathcal{H}$. It suffices to present a subset $S \subseteq V(H)$ such that $S$ is convex, but it is not the convex hull of its extreme vertices. Take $S = V(H)$.
	By definition, $S$ is a convex set. 
	By Item \ref{ite:t}, Item \ref{ite:Clnoextreme} and Item \ref{ite:Crnoextreme} of Definition \ref{def:familyH}, we know that $C_r\cup C_{\ell}$ is non-empty and composed of vertices that are not extreme.
	By Item \ref{ite:Cconcave}, we have that $\phullops{H}(B) = B$, which means that $S$ is not the convex hull of its extreme vertices.
\end{proof}

\begin{theorem}
	\label{thm:ops-geometries2}
	An oriented graph $D$ is a convex geometry in the $\opthrees$-convexity if and only if $D$ is a $\{\opfour, \oofour\}$-free DAG and for every set $S \subseteq V(D)$ such that $D[S] \in \mathcal{H}$, it holds that $\phullops{H}(S)$ is geometric.
\end{theorem}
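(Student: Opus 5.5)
The plan is to prove the two directions separately. For necessity we use the extreme-vertex description of Proposition~\ref{prop:extremep3s} together with Observation~\ref{obs:added-non-extreme}. For sufficiency we take a minimal non-geometric convex set and extract from it an induced member of $\mathcal{H}$. Throughout, I read the hull in the statement as the hull taken in $D$, that is, $\phullops{D}(S)$.

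\emph{Necessity.} Assume $D$ is a $\opthrees$-geometry.
\begin{itemize}
\item \emph{Acyclicity.} If $D$ has a directed cycle, take a shortest one, $C$. Any chord of $C$, in either direction, closes a shorter directed cycle, so $D[V(C)]\cong\cn{n}$ for some $n\ge 3$.
\begin{itemize}
\item Let $X=\phullops{D}(V(C))$. Each vertex of $C$ has an in-neighbour and an out-neighbour on $C$ that are either nonadjacent ($n\ge4$) or joined by the wrong arc ($n=3$). Hence no vertex of $C$ is a source, sink or transitive vertex of $D[X]$.
\item By Observation~\ref{obs:added-non-extreme}, no vertex of $X\setminus V(C)$ is extreme either.
\item So $X$ is a nonempty convex set with no extreme vertices, a contradiction.
\end{itemize}
\item \emph{Excluding $\opfour$ and $\oofour$.} Let $R=(x_0,x_1,x_2,x_3)$ induce $\opfour$ or $\oofour$, and put $X=\phullops{D}(V(R))$.
\begin{itemize}
\item As above, $x_1$ and $x_2$ are not extreme in $X$, since each has an in-neighbour and an out-neighbour on $R$ that are nonadjacent. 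By Observation~\ref{obs:added-non-extreme}, neither is any added vertex.
\item Hence geometry forces $X=\phullops{D}(\{x_0,x_3\})$, so $x_1,x_2\in\phullops{D}(\{x_0,x_3\})$.
\item I then want to show that the first vertex added when computing $\phullops{D}(\{x_0,x_3\})$ iteratively cannot exist. Such a vertex would need an induced $2$-path between $x_0$ and $x_3$. In the $\oofour$ case there is none, because $(x_0,x_3)\in A(D)$. In the $\opfour$ case, a common neighbour $z$ of $x_0$ and $x_3$ together with $R$ creates either a directed cycle or a shorter configuration, and acyclicity plus a minimal choice of $R$ rules this out.
\end{itemize}
This is the delicate step on the necessity side. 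I would fix $R$ with $\phullops{D}(V(R))$ inclusion-minimal and argue by that minimality.
\item \emph{Members of $\mathcal{H}$.} If $D[S]\in\mathcal{H}$, then $\phullops{D}(S)$ is convex, so it is geometric because $D$ is a geometry.
\end{itemize}

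\emph{Sufficiency.} Suppose $D$ satisfies the conditions but some convex set $K$ is not geometric. Choose such a $K$ inclusion-minimal, and let $E$ be the set of extreme vertices of $K$ and $Y=\phullops{D}(E)\subsetneq K$. Every $x\in K\setminus Y$ is non-extreme, so it is the middle of an induced path $(u,x,w)$ in $D[K]$. Because $Y$ is convex, at most one of $u,w$ lies in $Y$; whenever possible I choose one endpoint in $Y$, and the other endpoint then lies in $K\setminus Y$.
\begin{itemize}
\item Starting from any $x\in K\setminus Y$, repeatedly step to an endpoint outside $Y$. Label a vertex $\ell$ if its path is of the form $(b,x,x')$ with $b\in Y$, and $r$ if it is of the form $(x',x,b)$.
\item Since $K$ is finite, the walk revisits a vertex, which yields a circular sequence $c^1,\dots,c^m$. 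Note that the arcs of the cycle need not all point the same way, so this is not a directed cycle.
\item Let $B$ be the set of anchor vertices $b$ used, or more generally a minimal subset of $E$ whose hull supplies them. Take $S=B\cup\{c^i\}$, shrunk to a vertex-minimal such configuration.
\end{itemize}
We must then verify Definition~\ref{def:familyH} for $D[S]$:
\begin{itemize}
\item Conditions~\ref{ite:Clnoextreme} and~\ref{ite:Crnoextreme} hold by construction.
\item Condition~\ref{ite:Cconcave} follows because the $c^i$ lie outside the convex set $Y\supseteq\phullops{D}(B)$.
\item The cardinality conditions~\ref{ite:t} and~\ref{ite:non-empty} should follow from $\{\opfour,\oofour\}$-freeness and acyclicity, which prevent a short or one-sided cycle: an all-$\ell$ cycle is a directed cycle, and short mixed cycles create an induced $\opfour$ or $\oofour$. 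Minimality of the configuration gives $|B|\le\ell'+r'$.
\end{itemize}
Then $D[S]\in\mathcal{H}$, so $\phullops{D}(S)$ is geometric by hypothesis. Its extreme vertices all lie in $B$ or in its hull (Lemma~\ref{lem:cons}\ref{ite:Cnoextreme}), so $c^1\in\phullops{D}(B)\subseteq Y$, a contradiction.

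I expect the main obstacle to be this sufficiency construction. The difficulty is choosing the anchors and the cycle so that the induced subdigraph really satisfies all items of Definition~\ref{def:familyH}, especially the circular consistency of Lemma~\ref{lem:cons}\ref{ite:lrcycle} and the bounds $\min\{|B|,\ell',r'\}\ge2$. My first attempt would take a shortest closed walk in the auxiliary ``successor'' relation on $K\setminus Y$, and use $\{\opfour,\oofour\}$-freeness to shortcut any chord. If a chord exists, I would pass to a smaller cycle, contradicting the choice of a shortest one.
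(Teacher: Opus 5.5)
\textbf{Gap 1: necessity, excluding $\opfour$.} Your plan to show that no vertex is ever added to $\{x_0,x_3\}$ cannot work, because such vertices can exist. Take $R=(x_0,x_1,x_2,x_3)$ and add one vertex $z$ with arcs $(x_0,z)$ and $(z,x_3)$, with $z$ nonadjacent to $x_1$ and $x_2$.
\begin{itemize}
\item This digraph is acyclic and $\oofour$-free.
\item $R$ is its only induced $\opfour$, so a minimal choice of $R$ gives you nothing.
\item There is no ``shorter configuration'', yet $z\in\intfuncops(\{x_0,x_3\})$.
\end{itemize}
The contradiction must instead come from showing that $x_1,x_2$ never enter the hull, which is what the paper does:
\begin{itemize}
\item First prove $\oofour$-freeness.
\item Observe that $x_1,x_2\notin\intfuncops(\{x_0,x_3\})$, because $R$ is induced.
\item Show the hull stops at $\intfuncops(\{x_0,x_3\})$. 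A vertex $v$ entering in the second round lies on an induced path $(u,v,w)$ with $u,w\in\intfuncops(\{x_0,x_3\})\setminus\{x_0,x_3\}$; an endpoint in $\{x_0,x_3\}$ would force a directed cycle or a chord.
\item Since $v\notin\intfuncops(\{x_0,x_3\})$, either $(x_0,v)\notin A(D)$, and then $\{x_0,u,v,w\}$ induces $\oofour$; or $(v,x_3)\notin A(D)$, and then $\{u,v,w,x_3\}$ induces $\oofour$.
\end{itemize}

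\textbf{Gap 2: sufficiency, the construction of $S$.} Your closing step is sound once a suitable $S$ exists: every extreme vertex of $\phullops{D}(S)$ lies in $B\subseteq Y$, so the hull of the extremes misses $c^1$. The construction of $S$, however, has three unfilled holes.
\begin{itemize}
\item[(i)] \emph{Anchors.} The walk assumes every vertex it reaches is the centre of an induced $2$-path with an endpoint in $Y$. Your phrase ``whenever possible'' leaves open vertices all of whose induced $2$-paths have both ends in $K\setminus Y$. At such a vertex no label or anchor is defined, and items~\ref{ite:Clnoextreme}--\ref{ite:Crnoextreme} of Definition~\ref{def:familyH} cannot be verified. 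The paper isolates exactly this as a separate claim: every $c\in C$ is the centre of an induced path with exactly one end in $B$. It proves the claim via $\{\opfour,\oofour\}$-freeness, an arc deletion and minimality of the counterexample. You have no substitute.
\item[(ii)] \emph{Shrinking.} Shrinking to a vertex-minimal member of $\mathcal{H}$ is not harmless. Vertex-minimality quantifies over all partitions, so the smaller set may carry a partition $(B',C'_\ell,C'_r)$ with $B'\not\subseteq Y$ or $C'\cap Y\neq\emptyset$. Your final contradiction then disappears. The paper does not shrink; it makes the minimal counterexample itself the member of $\mathcal{H}$.
\item[(iii)] \emph{The bound $\min\{\ell',r'\}\ge 2$.} This is not just about short cycles. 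For $r'=1$ with a long cycle, you must use acyclicity and $\{\opfour,\oofour\}$-freeness to propagate the arc $(c^i,b)$ backwards from $c^{\ell'+r'-1}$ to $c^1$. That contradicts $(c^1,c^{\ell'+r'},b)$ being induced.
\end{itemize}
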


\begin{proof}
First, consider that $D$ is a convex geometry in the $\opthrees$-convexity. Let us first prove that $D$ is a DAG. By contradiction, suppose that $D$ contains a directed cycle and let $C=\overrightarrow{C_n}$ be one of minimum length for some integer $n \geq 3$. Thus $C$ is an induced subdigraph of $D$ that has no extreme vertices. Moreover, by Observation \ref{obs:added-non-extreme}, note that $\phullops{D}(V(C))$ is then a non-empty convex set that does not have extreme vertices, a contradiction to the hypothesis that $D$ is a convex geometry in the $\opthrees$-convexity.

Since $D$ is acyclic, then $D$ does not contain a directed cycle on three vertices. This also implies that, for any $S \subseteq V(D)$, a vertex $v \in \phullops{D}(S) \setminus S$ is added to $\phullops{D}(S)$ thanks to the existence of an induced path $(u,v,w)$ such that $v$ is the internal vertex whose endpoints $u$ and $w$ belong to $\phullops{D}(S)$. Thus, neither the arc $(u,w)$ nor $(w,u)$ may belong to $D$.

Let us now prove that $D$ is $\oofour$-free. By contradiction, suppose that $D$ contains an induced subdigraph $O$ isomorphic to $\oofour$. Without loss of generality, suppose that the vertices of $O$ are labeled as in Figure \ref{fig:ofour}.
By Observation~\ref{obs:added-non-extreme}, the set of extreme vertices of $\phullops{D}(V(O))$ is a subset of $\{x_0,x_3\}$. Since $(x_0,x_3) \in A(D)$ and $D$ is acyclic (and thus no directed $(x_3,x_0)$-path exists), observe that $\phullops{D}(\{x_0,x_3\}))=\{x_0,x_3\}$. Consequently, we have that $\phullops{D}(V(O))$ is a convex set in the $\opthrees$-convexity defined by $D$ that is not geometric, a contradiction.

Now, by contradiction, suppose that $D$ contains an induced subdigraph $P$ isomorphic to $\opfour$. Again, assume that the vertices of $P$ are labeled as in Figure \ref{fig:pfour}. 
By Observation~\ref{obs:added-non-extreme}, as in the previous case, the set of extreme vertices $E$ of $\phullops{D}(V(P))$ is a subset of $\{x_0,x_3\}$. It is clear that if $|E| \le 1$, then $\phullops{D}(V(O))$ is not geometric. Therefore, we can assume that $E = \{x_0,x_3\}$.
Moreover, since $D$ is a convex geometry, assume that $\phullops{D}(V(P)) = \phullops{D}(\{x_0,x_3\})$. Let us study which vertices of $D$ belong to $\phullops{D}(\{x_0,x_3\})$. Since $(x_0,x_3)\notin A(D)$, the last argument is not as trivial as before. Let $S = \intfuncops(\{x_0,x_3\})$ be the set of vertices of induced $(x_0,x_3)$-paths of length two in $D$ (recall that we have no directed $(x_3,x_0)$-paths, since $D$ is acyclic). Note that $\phullops{D}(\{x_0,x_3\}) \neq S$, since $x_1$ and $x_2$ do not belong to $S$, which would contradict the hypothesis that $D$ is a convex geometry. Consequently, let $v\in \pintfuncops{2}(\{x_0,x_3\})\setminus \intfuncops(\{x_0,x_3\})$ and let $u$ and $w$ be vertices of $S$ such that $\{u,w\}\subseteq S$ and $v$ is the internal vertex of an induced directed $(u,w)$-path in $D$.

Let us argue that $\{u,w\}\cap \{x_0,x_3\} = \emptyset$. By contradiction, suppose that $\{u,w\}\cap \{x_0,x_3\} \neq \emptyset$ and then that $|\{u,w\}\cap \{x_0,x_3\}| = 1$, since $v\in (\pintfuncops{2}(\{x_0,x_3\})\setminus \intfuncops(\{x_0,x_3\}))$. Without loss of generality, assume that $u\in (\intfuncops(\{x_0,x_3\})\setminus \{x_0,x_3\})$ and thus that $w = x_3$, since $D$ is acyclic. Note that the arc $(u,x_3)$ belongs to $D$, since $u\in (\intfuncops(\{x_0,x_3\})\setminus \{x_0,x_3\})$, which contradicts that the path $(u,v,x_3)$ was induced. Therefore, $\{u,w\}\cap \{x_0,x_3\} = \emptyset$.

Moreover, $x_0$ is not an in-neighbor of $v$ or $x_3$ is not an out-neighbor of $v$, since $v\in \pintfuncops{2}(\{x_0,x_3\})\setminus \intfuncops(\{x_0,x_3\})$. If $x_0$ is not an in-neighbor of $v$, then $\{x_0,u,v,w\}$ induces a $\oofour$ in $D$, contradicting what we proved last. The other case is analogous.

At last, since $D$ is a convex geometry, there is nothing to show for every set $S \subseteq V(D)$ such that $D[S] \in {\cal H}$.

\bigskip

Conversely, consider that $D$ is a $\{\opfour, \oofour\}$-free DAG and for every set $S \subseteq V(D)$ such that $D[S] \in {\cal H}$, it holds that $\hull(S)$ is geometric.
Suppose by contradiction that $D$ is not a convex geometry, i.e., $D$ has an induced subdigraph $G$ such that $V(G)$ is a convex set in the $\opthrees$-convexity defined over $D$ that is not geometric. We can assume that $D$ and $V(G)$ are minimal, i.e., any proper subdigraph of $D$ is a convex geometry and any proper subset of $V(G)$ is geometric.
These assumptions imply that $D = G$. Denote $B = \phullops{D}(\pextps{D}(V(D)))$ and $C = V(D) \setminus B$.
We will show that $C$ can be partitioned into $C_{\ell} \cup C_r$ such that $D$ is a member of $\mathcal{H}$.

We know that every vertex $c \in C$ is the central vertex of an induced $P_3$ of $D$ because $c \not\in \pextps{D}(V(D))$. Furthermore, we claim that for every $c \in C$, there is an induced path $(u,c,v)$ such that $|B \cap \{u,v\}| = 1$.
We know that in such a path it holds $|B \cap \{u,v\}| \le 1$ because if $u,v \in B$, then $c$ would belong to $B$ and not to $C$. Then, suppose by contradiction that for some $c \in C$, every induced path $(u,c,v)$ satisfies $u,v \in C$.

Since $D$ is a DAG, we can choose $c$ such that no ancestor $c'$ of $c$ is such that every induced path $(u,c',v)$ satisfies $u,v \in C$.
Therefore, there is an induced path $(b,u,u')$ or $(u',u,b)$ such that $b \in B$ and $u$ is an ancestor of $c$.
In the former case, since $D$ is acyclic $\{\oofour,\opfour\}$-free, $u' \ne c$ and $(b,c) \in A(D)$.
Since $c$ is not the central vertex of an induced dipath on three vertices containing a vertex of $b$, it holds that $(b,v) \in A(D)$.
Since $v \in C$, there is an induced path $(v'',v,v')$ such that $v'' \ne c$ and $(c,v') \in A(D)$, as otherwise $D$ would contain $\oofour$ or $\opfour$ as an induced subdigraph.
Since $c$ is not the central vertex of an induced dipath on three vertices containing a vertex of $B$, it holds that $(b,v') \in A(D)$.
Let $D' = D - (b,v)$.
The latter case is analogous and implies that $(u',v) \in A(D)$. 
In this case, let $D' = D - (u',v)$.

In both cases, Observation~\ref{obs:deleteArc} implies that $V(D')$ and $B$ are convex sets in $D'$. However, the deletion of an arc $(x,y)$ of a DAG $F$ can change $\pextps{F}(V(F))$ by making $x$ to become a sink, or $y$ to become a source, or a transitive vertex $z \not\in \{x,y\}$ to become a non-transitive vertex.
Since $(v'',v,v')$ is an induced path of $D'$, $v$ is not an extreme vertex of $D'$. Hence
$\pextps{D'}(V(D')) \subseteq \pextps{D}(V(D)) \cup \{b\}$.
Since $b \in B$, $\phullops{D'}(\pextps{D'}(V(D'))) = \phullops{D}(\pextps{D}(V(D))) = B$, which contradicts the assumption that $D$ is a minimal digraph that ha a convex set that is not geometric. Therefore, the claim is true.

Let $c$ be any vertex of $C$. First, consider that $c$ is the central vertex of an induced path $(b,c,c')$ for some $b \in B$ and $c' \in C$.
The case where we begin considering $c$ as the central vertex of an induced path $(c',c,b)$ for some $b \in B$ and $c' \in C$ is symmetric. Set $\ell' = 1$, $r' = 0$ and $c_\ell^{\ell'} = c$.

From now on, repeat the following process for $c'$ until that $c'$ be a vertex of $C_{\ell} \cup C_r$.
Since $c' \in C$, there is an induced path $(u,c',v)$ such that $|B \cap \{u,v\}| = 1$.
If $u \in B$, then set $\ell' = \ell'+1$, set $c^{\ell'+r'} = c'$, add $c'$ to $C_\ell$, redefine $c'$ as $v$ and repeat.
Otherwise, set $r' = r'+1$, set $c^{\ell'+r'} = c'$, add $c'$ to $C_r$, redefine $c'$ as $u$ and repeat.

Since $C$ is finite, this process eventually finishes.
By the construction, $C_{\ell} \cap C_r = \emptyset$ and it holds that no vertex of $C$ is the internal vertex of an induced path with both endpoints in $B$.
In fact, the minimality of $D$ implies that $C = C_{\ell} \cup C_r$ and that $|B| \le |C_{\ell}| + |C_r|$.
Furthermore, we claim that the process of the construction of the sets $C_{\ell}$ and $C_r$ finishes when $c'$ receives the value of $c^1$. Then, suppose by contradiction that the process finishes when $c'$ receives the value of $c^i$ for $i > 1$. Then, observe that $D[B \cup \{c^i, \ldots, c^{\ell'+r'} \}]$ is a graph that is not a convex geometry with fewer vertices than $D$, which is a contradiction.

The above facts imply that Items~\ref{ite:non-empty} to~\ref{ite:Crnoextreme} do hold.
It remains to show that Item \ref{ite:t} also holds.
Consider that $c_1 \in C_\ell$. The case where $c_1 \in C_r$ is analogous. Then, there is an induced path $(b_1,c_1,c_2)$ for $b_1 \in B$ and $c_2 \in C$.
If $c_2 \in C_{\ell}$, then there is an induced path $(b_2,c_2,c_3)$ for $b_2 \in B$ and $c_3 \in C$.
The case where $c_3 \in C_{\ell}$ is depicted in Figure~\ref{fig:item1}$(a)$ and
the case where $c_3 \in C_r$ is depicted in Figure~\ref{fig:item1}$(b)$.
Since $D$ is $\{\opfour, \oofour\}$-free, it holds that $b_2 \ne b_1$, which means that $|B| \ge 2$.
We also have that $c_3 \ne c_1$, which implies that $\ell' + r' \ge 3$. The case where $c_2 \in C_r$ is considered in Figure~\ref{fig:item1}$(c)$ and~$(d)$.
In all cases, it holds that there is an induced path $(b_3,c_3,c_4)$ for $b_3 \in B$ and $c_4 \in C$.
Note that $c_4 \not\in \{c_1,c_2\}$, which implies that $\ell' + r' \ge 4$.

Now, suppose by contradiction that $r' = 1$. The proof for the case where $\ell' = 1$ is analogous. By the circular order of $C_\ell \cup C_r$, we can assume that the only vertex of $C_r$ is $c^{\ell'+r'}$.
By the construction of $C_{\ell} \cup C_r$, for every positive $i \le \ell'+r'-1$, there is $b_i \in B$ such that $(b_i,c^i,c^{i+1})$ is an induced path. There is also a $b$ such that $(c^1,c^{\ell'+r'},b)$ is an induced path.
Since $D$ is acyclic and $\{\opfour, \oofour\}$-free,
the induced path $(b_{\ell'+r'-1}, c^{\ell'+r'-1}, c^{\ell'+r'})$ implies that the edge $(c^{\ell'+r'-1},b)$ exists.
If $\ell'+r' \ge 3$, the induced path $(b_{\ell'+r'-2}, c^{\ell'+r'-2}, c^{\ell'+r'-1})$ implies that the edge $(c^{\ell'+r'-2},b)$ exists.
In fact, for any $i \le \ell'+r'-1$, we have that the edge $(c^i,b)$ exists. However, this is a contradiction because of the induced path $(c^1,c^{\ell'+r'},b)$.
\end{proof}

\begin{figure}

%Cambie \def porque la revista no le gusta
\begin{tikzpicture}

\usetikzlibrary{arrows.meta}

\tikzset{texto/.style={circle,  minimum size=0pt, inner sep=0pt}}

%\begin{scope}[every node/.style={draw, fill=white, circle, thick, fill = white, inner sep=2pt}]

\begin{scope}[every node/.style={draw, fill=white, circle, thick, fill = white, inner sep=2pt}, every edge/.style={draw=black,thick}]

% LLL
\pgfmathsetmacro\x{0};
\pgfmathsetmacro\y{0};

\node [texto] (I) at (0, 1 ) [label=below:$(a)$]{};

\node [black] (b1) at (0 +\x,-1 +\y) [label=below:$b_1$]{};
\node [black] (b2) at (1.5 +\x,-1 +\y) [label=below:$b_2$]{};
\node [black] (b3) at (2 +\x,-2.5 +\y) [label=below:$b_3$]{};

\node (c1) at (1 +\x,0 +\y) [label=above:$c_1$]{};
\node (c2) at (3 +\x,0 +\y) [label=above:$c_2$]{};
\node (c3) at (4 +\x,-1 +\y) [label=above:$c_3$]{};
\node (c4) at (6 +\x,-1 +\y) [label=below:$c_4$]{};

\path [-Stealth] (b1) edge (c1);
\path [-Stealth,dashed] (b1) edge (c2);
\path [-Stealth] (c1) edge [bend right=-20] (c2);

\path [-Stealth] (b2) edge [bend right=20] (c2);
\path [-Stealth,dashed] (b2) edge (c3);
\path [-Stealth] (c2) edge (c3);

\path [-Stealth] (b3) edge (c3);
\path [-Stealth,dashed] (b3) edge [bend right=20] (c4);
\path [-Stealth] (c3) edge (c4);

% LLR (b)
\pgfmathsetmacro\x{8};
\pgfmathsetmacro\y{0};

\node [texto] (I) at (\x, 1+\y ) [label=below:$(b)$]{};

\node [black] (b1) at (0 +\x,-1 +\y) [label=below:$b_1$]{};
\node [black] (b2) at (1.5 +\x,-1 +\y) [label=below:$b_2$]{};
\node (c4) at (2 +\x,-2 +\y) [label=below:$c_4$]{};

\node (c1) at (1 +\x,0 +\y) [label=above:$c_1$]{};
\node (c2) at (3 +\x,0 +\y) [label=below:$c_2$]{};
\node (c3) at (4 +\x,-1 +\y) [label=above:$c_3$]{};
\node [black] (b3) at (6 +\x,-1 +\y) [label=below:$b_3$]{};

\path [-Stealth] (b1) edge (c1);
\path [-Stealth,dashed] (b1) edge (c2);
\path [-Stealth] (c1) edge [bend right=-20] (c2);

\path [-Stealth] (b2) edge [bend right=20] (c2);
\path [-Stealth,dashed] (b2) edge (c3);
\path [-Stealth] (c2) edge (c3);

\path [-Stealth] (c4) edge [bend right=10] (c3);
\path [-Stealth,dashed] (c4) edge [bend right=20] (b3);
\path [-Stealth] (c3) edge (b3);

% LRL (c)
\pgfmathsetmacro\x{0};
\pgfmathsetmacro\y{-5};

\node [texto] (I) at (\x, 1+\y ) [label=below:$(c)$]{};

\node [black] (b1) at (0 +\x,-1 +\y) [label=below:$b_1$]{};
\node  (c3) at (1.5 +\x,-1 +\y) [label=below:$c_3$]{};
\node [black] (b3) at (\x,-2 +\y) [label=below:$b_3$]{};

\node (c1) at (1 +\x,0 +\y) [label=above:$c_1$]{};
\node (c2) at (3 +\x,0 +\y) [label=above:$c_2$]{};
\node [black] (b2) at (5 +\x,\y) [label=above:$b_2$]{};
\node (c4) at (6 +\x,-1 +\y) [label=below:$c_4$]{};

\path [-Stealth] (b1) edge (c1);
\path [-Stealth,dashed] (b1) edge (c2);
\path [-Stealth] (c1) edge [bend right=-20] (c2);

\path [-Stealth] (c3) edge [bend right=20] (c2);
\path [-Stealth,dashed] (c3) edge [bend right=15] (b2);
\path [-Stealth] (c2) edge (b2);

\path [-Stealth] (b3) edge (c3);
\path [-Stealth,dashed] (b3) edge [bend right=25] (c4);
\path [-Stealth] (c3) edge (c4);

% LRR
\pgfmathsetmacro\x{8};
\pgfmathsetmacro\y{-5};

\node [texto] (I) at (\x, 1+\y ) [label=below:$(d)$]{};

\node [black] (b1) at (0 +\x,-1 +\y) [label=below:$b_1$]{};
\node (c3) at (1.5 +\x,-1 +\y) [label=below:$c_3$]{};
\node (c4) at (\x,-2 +\y) [label=below:$c_4$]{};

\node (c1) at (1 +\x,0 +\y) [label=above:$c_1$]{};
\node (c2) at (3 +\x,0 +\y) [label=above:$c_2$]{};
\node [black] (b2) at (5 +\x,\y) [label=above:$b_2$]{};
\node [black] (b3) at (6 +\x,-1 +\y) [label=below:$b_3$]{};

\path [-Stealth] (b1) edge (c1);
\path [-Stealth,dashed] (b1) edge (c2);
\path [-Stealth] (c1) edge [bend right=-20] (c2);

\path [-Stealth] (c3) edge [bend right=20] (c2);
\path [-Stealth,dashed] (c3) edge [bend right=15] (b2);
\path [-Stealth] (c2) edge (b2);

\path [-Stealth] (c4) edge (c3);
\path [-Stealth,dashed] (c4) edge [bend right=25] (b3);
\path [-Stealth] (c3) edge (b3);

\end{scope}

\end{tikzpicture}    

\caption{$(a)$ Case where $c_1, c_2, c_3 \in C_\ell$.
$(b)$ Case where $c_1, c_2 \in C_\ell$ and $c_3 \in C_r$.
$(c)$ Case where $c_1, c_3 \in C_\ell$ and $c_2 \in C_r$.
$(d)$ Case where $c_1 \in C_\ell$ and $c_2, c_3 \in C_r$.
\label{fig:item1}
}

\end{figure}

%%%%%%%%%%%%%%%%%%%%%%%%%%%%%%%%%%%%%%%%%%%
\subsection{Recognition is co-\NP-complete}
\label{sec:coNP}

\begin{theorem}
Given a directed acyclic graph $G$, it is co-\NP-complete to decide whether $G$ is a convex geometry in the $\opthrees$-convexity.
\end{theorem}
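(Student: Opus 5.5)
Membership in co-\NP{} and \NP-hardness of the complement are handled separately. For membership, a certificate that $G$ is \emph{not} a convex geometry is a set $S\subseteq V(G)$. Given $S$, we can check in polynomial time three things: that $S$ is convex, by verifying $\intfuncops(S)=S$; compute $\pextps{G[S]}(S)$ using Proposition~\ref{prop:extremep3s} (sources, sinks and transitive vertices of $G[S]$, since convexity of $S$ in $G$ restricts to $G[S]$); and compute $\phullops{G}$ of that set by iterating the interval function at most $n$ times. The certificate is accepted when the hull differs from $S$. So non-geometricity is in \NP{}, and recognition is in co-\NP{}.

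For hardness, I reduce from 3-SAT (or a satisfiability variant such as monotone/planar if needed) to the complement problem. Given a formula $\varphi$, I build a DAG $G_\varphi$ that has a non-geometric convex set if and only if $\varphi$ is satisfiable. By Theorem~\ref{thm:ops-geometries2}, I will make $G_\varphi$ a $\{\opfour,\oofour\}$-free DAG, so the only possible witnesses are induced members of $\mathcal{H}$ whose hull is not geometric.

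The guiding structure is Definition~\ref{def:familyH}. A witness consists of a small base $B$ together with a ``circular'' chain $c^1,\dots,c^{\ell'+r'}$ that alternates arcs $(b,c^i,c^{i+1})$ and $(c^{i+1},c^i,b)$ with base vertices. My plan is to design a long cycle-like backbone of $C$-vertices made of segments, one per clause, in series. Inside a clause segment there are parallel choices of route, one per literal, and each route uses a base vertex $b_{x}$ or $b_{\bar x}$ as the witness for Items~\ref{ite:Clnoextreme}/\ref{ite:Crnoextreme}. I then add conflict arcs between $b_x$ and $b_{\bar x}$, or extra vertices, so that if both $b_x$ and $b_{\bar x}$ are chosen into $S$, an induced path with both endpoints in $B$ passes through a $C$-vertex (violating Item~\ref{ite:Cconcave}), or the hull of $S$ absorbs extra vertices and becomes geometric. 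A satisfying assignment then selects one true literal per clause and yields a closed circular chain, that is, an induced $\mathcal{H}$-member whose hull is non-geometric. Conversely, any non-geometric convex set, by the converse direction of Theorem~\ref{thm:ops-geometries2} and its minimality argument, contains such a circular chain. That chain must traverse every clause segment, because the backbone is the only way to close the cycle, and it uses consistent literal vertices, which gives a satisfying assignment.

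The main obstacle is the gadget engineering under the global constraints. The whole digraph must remain acyclic and $\{\opfour,\oofour\}$-free, which forces many transitive shortcut arcs. Those shortcuts must not create unintended short induced $P_3$'s that put chain vertices into $\hull(B)$ or let chains close up without passing through all clauses. The minimality in Definition~\ref{def:familyH} must also be respected. I would first try to fix a layered ordering in which every vertex lies in one of three levels (sources, middle, sinks). This automatically kills $\opfour$, since every directed path then has length at most $2$, and it lets me control $\oofour$ and induced $P_3$'s explicitly. I would then verify both directions of the reduction case by case on the gadget arcs.
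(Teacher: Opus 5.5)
Your co-\NP{} membership argument is correct and matches the paper's. The hardness half is only a plan: no gadget is defined, and acyclicity, $\{\opfour,\oofour\}$-freeness and both directions of the equivalence are deferred to a case analysis of a construction you never give. Worse, the one structural choice you commit to makes the reduction impossible. If every vertex lies in one of three levels, so that every directed path has length at most $2$, then $G_\varphi$ is a convex geometry for \emph{every} $\varphi$. To see this, let $S$ be convex and let $v\in S$ be non-extreme in $G[S]$. Then there are $u,w\in S$ with $(u,v),(v,w)\in A$ and $(u,w)\notin A$. Since no directed path has length $3$, $u$ is a source and $w$ a sink of $G$, so both are extreme in $S$, and $v\in\intfuncops(\{u,w\})\subseteq\hullops(\extps(S))$. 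The same conclusion follows from Theorem~\ref{thm:ops-geometries2}. In a member of $\mathcal{H}$, every vertex of $C_\ell\cup C_r$ has an in-neighbor and an out-neighbor (Items~\ref{ite:Clnoextreme} and~\ref{ite:Crnoextreme} of Definition~\ref{def:familyH}). Consecutive ones are adjacent (Item~\ref{ite:lrcycle} of Lemma~\ref{lem:cons}), which gives a four-vertex directed path such as $b\to c^i\to c^{i+1}\to v$, so no witness can exist. Your own alternating backbone $(b,c^i,c^{i+1})$, $(c^{i+1},c^i,b)$ already requires adjacent ``middle'' vertices, which contradicts the three-level plan.

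The missing idea is that the backbone must occupy two middle layers. Directed paths of length $3$ are then unavoidable, so induced $\opfour$ and $\oofour$ must be excluded by transitive shortcut arcs, not by bounding path length. The paper uses four independent layers $W\to X\to Y\to Z$, with $G[X\cup Y]$ a single alternating cycle. This cycle threads one gadget $\{x_i,y_i\}$ per clause, with arcs $w_{i,j}\to x_i$, $x_i,y_i\to z_{i,j}$ and $w_{i,j}\to z_{i,j}$. It also threads one gadget $\{w'_k,x'_k,y'_k,z'_k\}$ per complementary pair $\{\ell_{p,q},\ell_{r,s}\}$, in which $(w_{p,q},x'_k,z_{r,s})$ is induced. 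Compatible literals in different clauses are joined by arcs $(w_{p,q},z_{r,s})$. A satisfying assignment then yields the non-geometric convex set $X\cup Y\cup W'\cup Z'\cup\{w_{i,j_i},z_{i,j_i}\}$. Conversely, an $\mathcal{H}$-witness is forced to have $C_\ell=X$, $C_r=Y$ and a consistent choice of one literal per clause.

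This junction is exactly where the ``gadget engineering'' bites. The paper's freeness argument says one $Y$-out-neighbor of each $x\in X$ inherits all in-neighbors of $x$. That holds for the clause vertices $x_i$ but not for the conflict vertices $x'_k$: their in-neighbors $w_{p,q},w_{r,s}$ are not in-neighbors of $y'_{k+1}$. Hence $\{w_{p,q},x'_k,y'_{k+1},z'_{k+1}\}$ induces an $\opfour$ unless $\ell_{p,q}$ also lies in the $(k+1)$-st pair of $\mathcal{T}$. Any construction you supply must verify $\{\opfour,\oofour\}$-freeness at precisely such junctions between gadgets.
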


\begin{proof}
Let $G$ be an oriented graph. By Proposition~\ref{prop:extremep3s}, the set of extreme vertices of any convex subset of $V(G)$ can be found in polynomial time. Given three vertices $u,v$ and $w$; the vertex $v$ belongs to the $\opthrees$-interval of $\{u,w\}$ if and only if the arcs $(u,v)$ and $(v,w)$ exist while the arc $(u,w)$ does not exist. It is clear that the existence of these arcs can be checked in polynomial time. Since $G$ is finite, the convex hull in the $\opthrees$-convexity of any subset of $V(G)$ can be computed in polynomial time. Therefore, given a set $S \subseteq V(G)$, one can confirm that $S$ is convex and not geometric in polynomial time, which implies that the problem of deciding whether a given oriented graph is a convex geometry in the $\opthrees$-convexity belongs to co-\NP.

For the hardness part, we present a reduction from the {\sc 3-SAT} problem. Consider a boolean expression $\mathcal{E}$ in the conjunctive normal form with $m$ clauses $C_1, \ldots, C_m$ on $n$ variables $a_1, \ldots, a_n$. For every clause $C_i$ of $\mathcal{E}$, denote its three literals by $\ell_{i,1}, \ell_{i,2}$ and $\ell_{i,3}$. We can assume that there is no clause of $\mathcal{E}$ containing two or more occurrences of the same variable. Furthermore, we can assume that there is no $i \in [m-1]$ such that $C_i$ contains a literal, which is the negation of a literal of $C_{i+1}$. Indeed, if this occurs, then we can create three variables $a_{n+1},a_{n+2},a_{n+3}$ and insert a copy of the clause $(a_{n+1} \vee a_{n+2} \vee a_{n+3})$ in any necessary place.
Let $\mathcal{R}$ be the family of pairs of literals of ${\cal E}$ belonging to different clauses such that one is not the negation of the other; and let ${\cal T}$ be the family of pairs of literals of ${\cal E}$ belonging to different clauses such that one is the negation of the other.

We construct an oriented graph $G$ as follows. See Figure \ref{fig:reduction} for an example.

\begin{itemize}
\item For every clause $C_i \in {\cal E}$, do the following (the arcs described below are black in Figure~\ref{fig:reduction}):

\begin{itemize}
\item add vertices $x_i$ and $y_i$, and add the arc $(x_i,y_i)$;

\item for every $j \in [3]$, add vertices $w_{i,j}$ and $z_{i,j}$, and add the arcs $(w_{i,j},x_i), (x_i, z_{i,j})$ and $(y_i, z_{i,j})$; and

\item for every $j \in [3]$, add the arc $(w_{i,j},z_{i,j})$.

\end{itemize}

\item For every pair of literals $\{\ell_{p,q}, \ell_{r,s}\} \in {\cal R}$, add the arcs $(w_{p,q}, z_{r,s})$ and $(w_{r,s}, z_{p,q})$. They are green arcs in Figure~\ref{fig:reduction}.

\item Consider an ordering of the members of ${\cal T}$ and denote its quantity by $t$.
For every $t' \in [ t ]$, let $\{\ell_{p,q}, \ell_{r,s}\}$ be the $t'$-th member ${\cal T}$. Then, add vertices $w'_{t'},$$ x'_{t'},$$ y'_{t'},$$ z'_{t'}$, and add the arcs $(w'_{t'},x'_{t'}),$$ (x'_{t'},y'_{t'}),$ $(y'_{t'},z'_{t'}),$$ (w'_{t'},z'_{t'})$ and $(x'_{t'},z'_{t'})$, they are black arcs in Figure~\ref{fig:reduction}, and add the arcs
$(w_{p,q},x'_{t'})$, $(w_{p,q},z'_{t'})$, $(x'_{t'},z_{r,s})$, $(w'_{t'},z_{r,s})$,
$(w_{r,s},x'_{t'})$, $(w_{r,s},z'_{t'})$, $(x'_{t'},z_{p,q})$ and $(w'_{t'},z_{p,q})$. They are red arcs in Figure~\ref{fig:reduction}.

The following edges are blue in Figure~\ref{fig:reduction}.

\item For every $i \in [m - 1]$ and every $j,k \in [3]$, add the arcs $(w_{i,j},y_{i+1}),$ $(w_{i,j},z_{i+1,k}),$$ (x_i,y_{i+1})$.

\item For every $j \in [3]$, add the arcs $(w_{m,j},y'_1)$, $(w_{m,j},z'_1),$$ (x_m,y'_1)$.

\item For every $i \in [t - 1]$, add the arcs $(w'_i,y'_{i+1}),$$ (w'_i,z'_{i+1}), (x'_i,y'_{i+1})$.

\item For every $j \in [3]$, add the arcs $(w'_t,y_1),$$ (w'_t,z_{1,j}),$$ (x'_t,y_1)$.

\end{itemize}

Denote
$W' = \{w'_i : i \in [t]\}$,
$X' = \{x'_i : i \in [t]\}$,
$Y' = \{y'_i : i \in [t]\}$,
$Z' = \{z'_i : i \in [t]\}$,
$W = W' \cup \{w_{i,j} : i \in [m]$ and $ j \in [3]\}$,
$X = X' \cup \{x_i : i \in [m] \}$,
$Y = Y' \cup\{y_i : i \in [m] \}$ and
$Z = Z' \cup \{z_{i,j} : i \in [m]$ and $ j \in [3]\}$.

hardness reduction
\begin{figure}[htbp]
\begin{center}
\usetikzlibrary{arrows, shapes,backgrounds,arrows.meta, shapes, positioning}

\begin{tikzpicture}[xscale=0.7]

\begin{scope}[every node/.style={draw, fill=white, circle, thick, inner sep=23pt}]
        \node (W1) at (-4.5-0.4,2) []{};
        \node (W2) at (-4.5-0.4,-2.5) []{};
        \node (W3) at (-4.5-0.4,-7) []{};

        \node (Z1) at (7.5+0.4,2) []{};
        \node (Z2) at (7.5+0.4,-2.5) []{};
        \node (Z3) at (7.5+0.4,-7) []{};
\end{scope}
    
\begin{scope}[every node/.style={draw, fill=white, circle, thick, fill = white, inner sep=2pt}]

% vertices of W
        \node (w11) at (-4.5,2+0.5) [label=left:$w_{1,1}$]{};
        \node (w12) at (-4.5,2) [label=left:$w_{1,2}$]{};
        \node (w13) at (-4.5,2-0.5) [label=left:$w_{1,3}$]{};

        \node (w21) at (-4.5,-2.5+0.5) [label=left:$w_{2,1}$]{};
        \node (w22) at (-4.5,-2.5) [label=left:$w_{2,2}$]{};
        \node (w23) at (-4.5,-2.5-0.5) [label=left:$w_{2,3}$]{};

        \node (w31) at (-4.5,-7+0.5) [label=left:$w_{3,1}$]{};
        \node (w32) at (-4.5,-7) [label=left:$w_{3,2}$]{};
        \node (w33) at (-4.5,-7-0.5) [label=left:$w_{3,3}$]{};

        \node (w'1) at (-4.5,-11) [label=left:$w'_1$]{};
        \node (w'2) at (-4.5,-11-3) [label=left:$w'_2$]{};

% vertices of X
        \node (x1) at (0,2) [label=above:$x_1$]{};
        \node (x2) at (0,-2.5) [label=above:$x_2$]{};
        \node (x3) at (0,-7) [label=above:$x_3$]{};

        \node (x'1) at (0,-11) [label=above:$x'_1$]{};
        \node (x'2) at (0,-11-3) [label=below:$x'_2$]{};

% vertices of Y
        \node (y1) at (3,2) [label=above:$y_1$]{};
        \node (y2) at (3,-2.5) [label=above:$y_2$]{};
        \node (y3) at (3,-7) [label=above:$y_3$]{};

        \node (y'1) at (3,-11) [label=above:$y'_1$]{};
        \node (y'2) at (3,-11-3) [label=above:$y'_2$]{};

% vertices of Z
        \node (z11) at (7.5,2+0.5) [label=right:$z_{1,1}$]{};
        \node (z12) at (7.5,2) [label=right:$z_{1,2}$]{};
        \node (z13) at (7.5,2-0.5) [label=right:$z_{1,3}$]{};

        \node (z21) at (7.5,-2.5+0.5) [label=right:$z_{2,1}$]{};
        \node (z22) at (7.5,-2.5) [label=right:$z_{2,2}$]{};
        \node (z23) at (7.5,-2.5-0.5) [label=right:$z_{2,3}$]{};

        \node (z31) at (7.5,-7+0.5) [label=right:$z_{3,1}$]{};
        \node (z32) at (7.5,-7) [label=right:$z_{3,2}$]{};
        \node (z33) at (7.5,-7-0.5) [label=right:$z_{3,3}$]{};

        \node (z'1) at (7.5,-11) [label=right:$z'_1$]{};
        \node (z'2) at (7.5,-11-3) [label=right:$z'_2$]{};
\end{scope}
      
\begin{scope}[>=stealth,every edge/.style={draw=black}]

% edges W --> X
        \path [->] (W1) edge node  {} (x1);
        \path [->] (W2) edge node  {} (x2);
        \path [->] (W3) edge node  {} (x3);

        \path [->] (w'1) edge node  {} (x'1);
        \path [->] (w'2) edge node  {} (x'2);

% edges W --> Z
        \path [->] (w11) edge [bend right=-25] node  {} (z11);
        \path [->] (w12) edge [bend right=-27] node  {} (z12);
         \path [->] (w13) edge [bend right=-30] node  {} (z13);

        \path [->] (w21) edge [bend right=-25] node  {} (z21);
        \path [->] (w22) edge [bend right=-27] node  {} (z22);
       \path [->] (w23) edge [bend right=-30] node  {} (z23);

        \path [->] (w31) edge [bend right=-25] node  {} (z31);
        \path [->] (w32) edge [bend right=-27] node  {} (z32);
        \path [->] (w33) edge [bend right=-30] node  {} (z33);

        \path [->] (w'1) edge [bend right=-20] node  {} (z'1);
        \path [->] (w'2) edge [bend right=-20] node  {} (z'2);

% edges X --> Y
        \path [->] (x1) edge node  {} (y1);
        \path [->] (x2) edge node  {} (y2);
        \path [->] (x3) edge node  {} (y3);

        \path [->] (x'1) edge node  {} (y'1);
        \path [->] (x'2) edge node  {} (y'2);

% edges X --> Z
        \path [->] (x1) edge [bend right=20] node  {} (Z1);
        \path [->] (x2) edge [bend right=20] node  {} (Z2);
        \path [->] (x3) edge [bend right=20] node  {} (Z3);

        \path [->] (x'1) edge [bend right=15] node  {} (z'1);
        \path [->] (x'2) edge [bend right=15]  node  {} (z'2);

% edges Y --> Z
        \path [->] (y1) edge node  {} (Z1);
        \path [->] (y2) edge node  {} (Z2);
        \path [->] (y3) edge node  {} (Z3);

        \path [->] (y'1) edge node  {} (z'1);
        \path [->] (y'2) edge node  {} (z'2);
      \end{scope}

\begin{scope}[>=stealth,every edge/.style={draw=green}]

% edges not negated
      \path [->] (w11) edge node  {} (Z2);
      \path [->] (W2) edge node  {} (z11);
      \path [->] (w11) edge node  {} (z32);
      \path [->] (w32) edge node  {} (z11);
      \path [->] (w11) edge node  {} (z33);
      \path [->] (w33) edge node  {} (z11);
\end{scope}

\begin{scope}[>=stealth,every edge/.style={draw=red}]

% edges negated pair 1
      \path [->] (w11) edge node  {} (x'1);
      \path [->] (w11) edge node  {} (z'1);
      \path [->] (x'1) edge node  {} (z31);
      \path [->] (w'1) edge node  {} (z31);
      \path [->] (w31) edge node  {} (x'1);
      \path [->] (w31) edge node  {} (z'1);
      \path [->] (x'1) edge node  {} (z11);
      \path [->] (w'1) edge node  {} (z11);
\end{scope}

\begin{scope}[>=stealth,every edge/.style={draw=blue}]

% edges W --> Y
      \path [->] (W1) edge node  {} (y2);
      \path [->] (W2) edge node  {} (y3);
      \path [->] (w'1) edge node  {} (y'2);

% edges X --> Y
      \path [->] (x1) edge node  {} (y2);
      \path [->] (x2) edge node  {} (y3);
      \path [->] (x3) edge node  {} (y'1);

      \path [->] (x'1) edge node  {} (y'2);
      \path [->] (x'2) edge node  {} (y1);

% edges W --> Z
      \path [->] (W1) edge node  {} (Z2);
      \path [->] (W2) edge node  {} (Z3);
      \path [->] (w'1) edge node  {} (z'2);
      \path [->] (w'2) edge node  {} (y1);
      \path [->] (w'2) edge node  {} (Z1);
\end{scope}

\end{tikzpicture}    

\caption[]{Oriented graph $G$ constructed from the boolean expression $\mathcal{E}=(a_1 \vee\overline{a_2}\vee a_3)\wedge (\overline{a_2}\vee a_4 \vee a_5) \wedge (\overline{a_1}\vee \overline{a_3}\vee a_4)$. Family $\mathcal{T} = \{ \{\ell_{1,1},\ell_{3,1}\}, \{\ell_{1,2},\ell_{4,1}\} \}$ and $t=2$.
To make the drawing cleaner, we do not draw all arcs. We used an arc from a set of vertices $Q$ to a set $Q'$ standing that there is an arc from every vertex of $Q$ to every vertex of $Q'$. All black and blue edges are drawn. For the green and red edges, only the ones associated with literal $a_1$ are drawn.}

\label{fig:reduction}
\end{center}	
\end{figure}

We will show that there is a truth assignment that satisfies all clauses of ${\cal E}$ if and only if $G$ is not a convex geometry in the $\opthrees$-convexity.

\bigskip

For the necessity, consider that there is a truth assignment $\phi$ that satisfies all clauses of ${\cal E}$. For every $i \in [m]$, denote by $\ell_{i,j_i}$ one of the literals of $C_i$ that has value true under $\phi$. Construct a set $S \subset V(G)$ as follows. For every $i \in [m]$, add $w_{i,j_i}$ and $z_{i,j_i}$ to $S$; for $i \in [t]$, add $w'_i$ and $z'_i$ to $S$; and add all vertices of $X \cup Y$ to $S$.

We claim that $S$ is a convex set. By contradiction, suppose that there are $a,c \in S$ and $b \not\in S$ such that $(a,b,c)$ is an induced directed path. By the construction of $G$, every vertex of $W$ is a source of $G$ and every vertex of $Z$ is a sink of $G$, which implies that $b \in X \cup Y$. But this yields a contradiction because $X \cup Y \subset S$. Therefore, $S$ is a convex set.

Next, we show that $S \cap (W \cup Z) = \extps(S)$.
Every vertex of $X \cup Y \subset S$ is not an extreme vertex of $S$ because of the following facts.

\begin{itemize}

\item for every $i \in [m]$, $x_i$ is not extreme in $S$ because $y_i \in S$, there is $w_{i,j} \in S$ for some $j \in [3]$ and $(w_{i,j},x_i,y_i)$ is an induced directed path;

\item the vertex $y_1$ is not extreme in $S$, because $x'_t \in S$ and there is $z_{1,j} \in S$ for some $j \in [3]$, and the dipath $(x'_t, y_1, z_{1,j})$ is induced;

\item for every $i \in \{2, \ldots, m\}$, $y_i$ is not extreme in $S$ because $x_{i-1} \in S$, there is $z_{i,j} \in S$ for some $j \in [3]$ and $(x_{i-1},y_i,z_{i,j})$ is an induced dipath;

\item for every $i \in [t]$, $(w'_i,x'_i,y'_i)$ is an induced dipath and $w'_i,y'_i \in S$;

\item the vertex $y'_1$ is not extreme in $S$ because $x_m,z'_1 \in S$ and the dipath $(x_m, y'_1, z'_1)$ is induced; and

\item for every $i \in \{2, \ldots, t\}$, $(x'_{i-1},x'_i,z'_i)$ is an induced dipath, and $x'_{i-1},z'_i \in S$.

\end{itemize}

Hence, since every vertex of $W$ is a source of $G$ and every vertex of $Z$ is a sink of $G$, we have that $S \cap (W \cup Z) = \extps(S)$.

Now, we claim that $\hullops(\extps(S)) = \extps(S)$. By contradiction, suppose the contrary, i.e., there are $w,z \in \extps(S)$ and $u \not\in \extps(S)$ such that $(w,u,z)$ is an induced dipath. By the previous arguments, it holds that $w \in S \cap W$, $z \in S \cap Z$, and $u \in X \cup Y$.

First, consider that $u=x_i$ for some $i \in [m]$.
Since $N^-(x_i)=\{w_{i,1}$,$w_{i,2},$ $w_{i,3}\}$ and $N^+(x_i)=\{z_{i,1},$ $z_{i,2},$ $z_{i,3},y'\}$ for some $y' \in Y$, we have that $w = w_{i,j}$ for some $j \in [3]$, and $z = z_{i,j'}$ for some $j' \in [3]$, which is not possible because $(w_{i,j},z_{i,j'})$ is an arc of $G$.

Now, consider that $u=x'_i$ for some $i \in [t]$. By the construction of $G$, every out-neighbor of $x'_i$ which belongs to $Z$ is also an out-neighbor of $w'_i$. Then we can assume that $w = w_{i,j}$ for some $i \in [m]$ and $j \in [3]$. We also have that every in-neighbor of $x'_i$ belonging to $W$ is also an in-neighbor of $z'_i$. Then we can assume that $z = z_{i',j'}$ for some $i' \in [m]$ and $j' \in [3]$. Since $(w,u,z)$ is an induced path, $(w_{i,j}, z_{i',j'}) \not\in E(G)$, which means that the literals corresponding to $w_{i,j}$ and $z_{i',j'}$ is a pair of ${\cal T}$, i.e., they belong to different clauses and one is the negation of the other, which contradicts the construction of $S$.

Next, consider that $u = y_i$ for some $i \in [m]$. Then, 
$w = w'_t$ if $i = 1$ or $w = w_{i-1,j}$ for some $j \in [3]$ if $i \ge 2$,
and $z = z_{i,j'}$ for some $j' \in [3]$. But note that $(w,z_{i,j'})$ is an arc of $G$, which is not possible.

At last, consider that $u = y'_i$ for some $i \in [t]$.
For $i = 1$, note that $y'_i$ has only three in-neighbors belonging to $W$, namely $w_{m,1}, w_{m,2}$ and $w_{m,3}$, and has only one out-neighbor belonging to $Z$, the vertex $z'_1$. Moreover, we have that $(w_{m,j}, z'_1) \in E(G)$ for every $j \in [3]$, which is not possible.
For every $i \ge 2$, note that $w'_{i-1}$ is the onlyl in-neighbor of $y'_i$ belonging to $W$, and $z'_i$ is the only out-neighbor of $y'_i$ belonging to $Z$. However, $(w'_{i-1}, z'_i) \in E(G)$, which implies that $G$ is not a convex geometry.

\bigskip

For the sufficiency, consider that $G$ is not a convex geometry.
We begin by showing that $G$ is $\{\opfour, \oofour\}$-free.
Note that $W, X, Y$ and $Z$ are independent sets. Note also 
that there is no arc from a vertex of $X \cup Y \cup Z$ to a vertex of $W$,
that there is no arc from a vertex of $Y \cup Z$ to a vertex of $X$,
and that there is no arc from a vertex of $Z$ to a vertex of $Y$.
Therefore, if $G$ has an induced $\opfour$ or an induced $\oofour$ with arcs $(w,x), (x,y)$ and $(y,z)$, we would have that
$w \in W$,
$x \in X$,
$y \in Y$, and
$z \in Z$.
Now, note that among the out-neighbors of every $x' \in X$, two belong to $Y$, say $y'$ and $y''$.
Note that one of these two vertices, let us say $y'$, has as in-neighbors all in-neighbors of $x'$.
Therefore, $x'$ and $y'$ do not belong to an induced $\opfour$ nor an induced $\oofour$.
Note also that $x'$ is an in-neighbor of all out-neighbors of $y''$.
Therefore, $x'$ and $y''$ do not belong to an induced $\opfour$ nor an induced $\oofour$, which means that $G$ is $\{\opfour, \oofour\}$-free.

Therefore, by Theorem~\ref{thm:ops-geometries2}, $G$ has an induced subgraph $H \in {\cal H}$ such that $V(H) \setminus \hullops(B') \ne \emptyset$ where $B' = \extps(\hullops(V(H)))$. Let $(B, C_\ell = \{c_\ell^1, \ldots, c_\ell^{\ell'}\},C_r  = \{c_r^1, \ldots, c_r^{r'}\})$ be a partition of $V(H)$ and $c^1, \ldots, c^{\ell'+r'}$ be a circular ordering of the vertices of $C_\ell \cup C_r$ satisfying Definition \ref{def:familyH}.

By Item \ref{ite:Clnoextreme} of Definition \ref{def:familyH}, 
for every $i \in [\ell'+r']$, if $c^i \in C_\ell$, then there is $b \in B$ such that $(b,c^i,c^{i+1})$ is an induced path. Therefore $C_\ell \subset X \cup Y$.
By Item \ref{ite:Crnoextreme} of Definition \ref{def:familyH}, 
for $i \in [\ell'+r']$, if $c^i \in C_r$, then there is $b \in B$ such that $(c^{i+1},c^i,b)$ is an induced path. Therefore $C_r \subset X \cup Y$. 

Now, note that in the subgraph $G[X \cup Y]$, every vertex of $X$ has in-degree zero and out-degree two, while every vertex of $Y$ has in-degree two and out-degree zero. Therefore, using Item  \ref{ite:lrcycle} of Lemma \ref{lem:cons}, we conclude that $C_\ell \subseteq X$ and $C_r \subseteq Y$.

Furthermore, for every $x \in X \cap C_\ell$, among the two out-neighbors of $x$ in the graph $G[X \cup Y]$, only one can be the vertex $y \in Y \cap C_r$ such that $(b,x,y)$ is an induced path for some $b \in B$.
Similarly, for every $y \in Y \cap C_r$, among the two in-neighbors of $y$ in the graph $G[X \cup Y]$, only one can be the vertex $x \in X \cap C_\ell$ such that $(x,y,b)$ is an induced path for some $b \in B$.
Therefore, by the circular ordering of $C_\ell \cup C_r$, we conclude that $C_\ell = X$ and $C_r = Y$.

Since every vertex of $W \cup Z$ is extreme in $G$ and $B \subset W \cup Z$, we have that every vertex of $B$ is an extreme of $\hull(V(H))$, which means that $B = B'$ and $\hull(V(H)) \setminus B \subseteq X \cup Y$.
Since $V(H) \setminus \hullops(B) \ne \emptyset$, using Item \ref{ite:one} of Lemma \ref{lem:cons}, we conclude that $\hullops(\extps(\hullops(V(H)))) = \hullops(B)$ does not have any vertex of $X \cup Y$.

Using Item \ref{ite:Cnoextreme} of Lemma \ref{lem:cons}, we have that every vertex of $C_\ell$ has at least one in-neighbor $w_i$ of $B$ belonging to $W$.
Note that for different positive values $i,j \le m$, we have that $N^-(x_i) \cap N^-(x_j) = \emptyset$, which means that $w_i \ne w_j$ as long as $i \ne j$.
Let $S = \{w_i : 1 \le i \le m \}$.
Analogously, we have that every vertex of $C_r$ has at least one out-neighbor $z_i$ of $B$ belonging to $Z$. Note that for different positive values $i,j \le m$, we have that $N^+(z_i) \cap N^+(z_j) = \emptyset$, which means that $z_i \ne z_j$ as long as $i \ne j$.
Furthermore, for $w_i = w_{i,q}$ and $z_i = w_{i,q'}$, we have that $q = q'$ because if $q \ne q'$, then $(w_{i,q}, z_{i,q'}) \not\in E(G)$ and $(w_{i,q},x_i, z_{i,q'})$ is an induced path, which means that $\hullops(B)$ contains a vertex of $X \cup Y$, which is not true.

To complete the proof, it suffices to show that for any two vertices of $S$, the corresponding literals belong to different clauses and that one is not the negation of the other.
By the construction of $S$, it is clear that any two vertices correspond to literals of different clauses. Then, suppose by contradiction that there are positive values $i,j \le m$ such that the vertices $w_i = w_{p,q} \in S$ and $w_j = w_{r,s} \in S$ correspond to literals, with one being the negation of the other.
Due to the assumption that there is no positive $i \le m-1$ such that $C_i$ contains a literal that is the negation of a literal of $C_{i+1}$, we have that $|p - r| \ge 2$.
Then $\{w_{p,q},w_{r,s}\}$ is a pair of vertices associated with a pair $\{\ell_{p,q},\ell_{r,s}\} \in {\cal T}$. Assume that $\{\ell_{p,q},\ell_{r,s}\}$ is the $k-th$ pair of ${\cal T}$ in the considered ordering to construct $G$.
Note that $(w_{p,q}, x'_k, z_{r,s})$ is an induced path where $w_{p,q}, z_{r,s} \in B$, which means that 
$\hullops(B)$ contains vertices of $X \cup Y$, a contradiction.
\end{proof}

%%%%%%%%%%%%%%%%%%%%%%%%%%%%%%%%%%%%%%%%
\subsection{Acyclic indifference oriented graphs: a polynomial case.}\label{sec:acycleindif}

In this section, we present a new class of oriented graphs, to the best of our knowledge, in which deciding if an oriented graph is a geometry with respect 
to $\opthrees$-convexity is polynomial-time solvable. The definition of this class is inspired by the class of indifference graphs.

A graph $G$ is an \emph{indifference} graph if and only if there is a total order $\prec$ over $V(G)$ such that if $u\prec v\prec w$ and $uw\in E(G)$, then $uv,vw \in E(G)$~\cite{LO1993}. Instead of a total order, we consider a partial order, and the maximal cliques are replaced by maximal transitive tournaments. A related subclass was presented in~\cite{tesisGuada}, where the author considers a tree-like order in which the Hasse diagram of the partial order is a tree.

\begin{definition}
    \label{def:acyclic-indifference}
    Let $D$ be an oriented acyclic graph. We say that $D$ is \textit{acyclic indifference} if there is a partial order $\prec$ on $V(D)$ such that:
    
    \begin{enumerate}[label=\alph*)]
        \item if $(x,y)\in A(D)$, then $x\prec y$; and
        \item if $x$, $y$, $z$ are vertices of $D$ with $x \prec y \prec   z$ and
    $(x,z)\in A(D)$, then $(x,y)\in A(D)$ and $(y, z)\in A(D)$.
    \end{enumerate} 
\end{definition}

See Figure \ref{fig:exampleToAcyclicIndif} for an example of an acyclic indifference oriented graph.
\begin{figure}[!ht]
    \centering
    \begin{tikzpicture}   
      \begin{scope}[every node/.style={draw, fill=white, circle, thick, fill = white, inner sep=2pt}]
        \node (x0) at (-1.5,0) [label=above:$x_0$]{};
        \node (x1) at (-0.5,0) [label=above:$x_1$]{};
        \node (x0p) at (-1.5,1) [label=above:$x_0'$]{};
        \node (x1p) at (-0.5,1) [label=above:$x_1'$]{};
        \node (x2) at (0.5,0) [label=above:$x_2$]{};
        \node (x3) at (1.5,0) [label=above:$x_3$]{};
        \node (x3p) at (1.5,1) [label=above:$x_3'$]{};
        \node (x4) at (2.5,0) [label=above:$x_4$]{};
      \end{scope}
      
      \begin{scope}[>={stealth[black]},
        % 			  every node/.style={fill=white,circle},
        every edge/.style={draw=black,very thick}]
        \path [->] (x0p) edge node  {} (x1p);
        \path [->] (x1p) edge node  {} (x2);
        \path [->] (x0) edge node  {} (x1);
        \path [->] (x1) edge node  {} (x2);
        \path [->] (x2) edge node  {} (x3);
        \path [->] (x3) edge node  {} (x4);
        \path [->] (x2) edge node  {} (x3p);
        \path [->] (x3p) edge node  {} (x4);
        \path [->] (x0) edge [bend right=45] node  {} (x2);
        \path [->] (x2) edge [bend right=45] node  {} (x4);
      \end{scope}
    \end{tikzpicture}    
    \caption[]{Example of an acyclic indifference oriented graph.}
    \label{fig:exampleToAcyclicIndif}
  \end{figure}

An equivalent way to define the graphs in the class is as follows.
\begin{observation}
    \label{obs:acyclicIndif-pathsVersion}
    A DAG $D$ is an \textit{acyclic indifference} if and only if for any vertices $x$, $y$, $z$ of $D$ such that there are an $(x,y)$-path, a $(y,z)$-path, and $(x,z) \in A(D)$, it holds that $(x,y) \in A(D)$ and $(y,z) \in A(D)$.
\end{observation}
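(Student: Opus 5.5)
The plan is to prove the two directions separately, using Definition~\ref{def:acyclic-indifference} on one side and the path-based condition on the other, with the reachability relation of $D$ as the bridge. Throughout, write $x \le_D y$ when there is a directed $(x,y)$-path in $D$ (including $x=y$). Since $D$ is a DAG, $\le_D$ is a partial order, and it is the smallest partial order satisfying condition a) of Definition~\ref{def:acyclic-indifference}.

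\emph{From the definition to the path condition.} Suppose $\prec$ witnesses that $D$ is acyclic indifference. Take $x,y,z$ with an $(x,y)$-path, a $(y,z)$-path and $(x,z)\in A(D)$. Applying a) along each path and using transitivity of $\prec$ gives $x \preceq y \preceq z$.
\begin{itemize}
\item If $x=y$ or $y=z$, then $x$ and $z$ are joined by a path in both directions, which is impossible in an acyclic graph. (If one reads $y$ as distinct from $x$ and $z$, this degenerate case simply does not arise.)
\item Otherwise $x\prec y\prec z$ strictly, and condition b) immediately yields $(x,y),(y,z)\in A(D)$.
\end{itemize}

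\emph{From the path condition to the definition.} Conversely, assume the path condition holds and take $\prec$ to be the strict part of $\le_D$, that is, $x\prec y$ iff $x\neq y$ and there is an $(x,y)$-path.
\begin{itemize}
\item Acyclicity makes $\prec$ irreflexive and antisymmetric, and concatenating paths gives transitivity, so $\prec$ is a strict partial order.
\item Condition a) holds trivially, since an arc is a path.
\item For condition b), take $x\prec y\prec z$ with $(x,z)\in A(D)$. By definition of $\prec$ there is an $(x,y)$-path and a $(y,z)$-path, so the path condition gives both required arcs.
\end{itemize}

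There is no genuine obstacle. The only points needing care are, first, choosing the reachability order as the witness in the converse direction, and second, handling the degenerate case where $y$ coincides with $x$ or $z$, which acyclicity rules out.
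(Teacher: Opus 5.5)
The paper states this observation without proof. Your argument is the natural one: for the converse, use strict reachability as the witnessing order; for the forward direction, apply a) along the two paths and then b). It is correct in substance. You are also right to read $\prec$ as a strict order: with a reflexive order, b) applied to $y=x$ would demand a loop at $x$ for every arc $(x,z)$.

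One justification is wrong, however. In the degenerate case $x=y$ (or $y=z$), it is not true that $x$ and $z$ are joined by paths in both directions. The arc $(x,z)$ and the remaining path both go from $x$ to $z$, and nothing goes back.

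What actually rules the case out is different. The $(x,x)$-path (resp.\ $(z,z)$-path) would be a closed directed walk of positive length, so it would contain a directed cycle. More simply, applying a) along a path of positive length already gives the strict relations $x\prec y$ and $y\prec z$. Irreflexivity then gives $x\neq y$ and $y\neq z$, so no case split is needed.

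This depends on reading ``path'' as nontrivial. If a trivial $(x,x)$-path were allowed, then for every arc $(x,z)$ the choice $y=x$ would force the loop $(x,x)$. The ``only if'' direction would then fail for every DAG with at least one arc. So the statement must be read with paths of positive length (equivalently, with $x,y,z$ distinct). Your parenthetical remark is therefore doing real work rather than being an aside.
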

    
Note that this class is hereditary and $\oofour$-free.

\begin{theorem}
    \label{thm:acyclic-indifference}
    An acyclic indifference oriented graph $D$ is a convex geometry in the $\opthrees$-    convexity if and only if $D$ is $\overrightarrow{P_4}$-free.
\end{theorem}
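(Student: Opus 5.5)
Necessity is immediate from Theorem~\ref{thm:ops-geometries2}: if $D$ is a convex geometry in the $\opthrees$-convexity, then $D$ is $\{\opfour,\oofour\}$-free, and in particular $\opfour$-free. For sufficiency, assume $D$ is acyclic indifference and $\opfour$-free. By the remark after Observation~\ref{obs:acyclicIndif-pathsVersion}, $D$ is a $\oofour$-free DAG. By Theorem~\ref{thm:ops-geometries2}, it then suffices to show that $D$ has no induced subdigraph belonging to $\mathcal{H}$. Since the class is hereditary, it is enough to prove that no $H\in\mathcal{H}$ is acyclic indifference. We argue by contradiction: suppose $H\in\mathcal{H}$, with partition $(B,C_\ell,C_r)$ and circular ordering $c^1,\dots,c^{\ell'+r'}$, admits a partial order $\prec$ as in Definition~\ref{def:acyclic-indifference}.

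The key structural step is to study the circular sequence. By Lemma~\ref{lem:cons}\ref{ite:lrcycle}, consecutive vertices $c^i,c^{i+1}$ are joined by an arc, oriented forward when $c^i\in C_\ell$ and backward when $c^i\in C_r$. Since $\ell',r'\ge 2$, both orientations occur. Since $H$ is acyclic, the cyclic sequence of arcs changes direction, so there are ``peaks'' (local sinks of the cycle) and ``valleys'' (local sources). At a $C_\ell$ vertex $c^i$ we have an induced path $(b,c^i,c^{i+1})$ with $b\in B$, so $b\prec c^i\prec c^{i+1}$ and $(b,c^{i+1})\notin A$. By condition (b) of Definition~\ref{def:acyclic-indifference}, any arc of $H$ spanning a $\prec$-chain forces all intermediate arcs. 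I would combine this with $\opfour$-freeness to propagate: along a maximal run $c^i\to c^{i+1}\to\dots\to c^j$ of forward arcs, consider $(b,c^i,c^{i+1},c^{i+2})$. This is a directed walk, and because $(b,c^{i+1})\notin A$ and $\opfour$ is forbidden, one of the chords $(c^i,c^{i+2})$ or $(b,c^{i+2})$ must be present. Either chord combined with (b) yields further arcs, and I expect to derive that runs have length one, or that a $B$-vertex becomes adjacent to a $C$-vertex in a way that violates Item~\ref{ite:Cconcave} (a $C$-vertex internal to an induced path between $B$-vertices).

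The main obstacle is closing the cycle. I would take a valley $c^k$, a source of the cycle, where $c^{k-1}\in C_r$ and $c^k\in C_\ell$. This gives $b\to c^k\to c^{k+1}$ and $c^k\to c^{k-1}\to b'$, both induced. In the order, $b\prec c^k\prec c^{k-1}\prec b'$, and symmetric relations hold at peaks. Walking around the cycle from valley to peak to valley, we obtain chains in $\prec$. The aim is to show that, going around the cycle, some $B$-vertex $b$ and some later vertex satisfy $b\prec c\prec b''$ with an arc $(b,b'')$. Then (b) forces $(b,c),(c,b'')\in A$, and the path $(b,c,b'')$ is not induced; I must instead produce an induced one to contradict Item~\ref{ite:Cconcave}. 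Alternatively, I would show that the arcs forced by (b) make $c^i$ transitive or a source or sink, contradicting Lemma~\ref{lem:cons}\ref{ite:Cnoextreme}. A third option is to use the minimality of $H$ in Definition~\ref{def:familyH} to get a contradiction. I would first try the concrete small case analysis as in the proof of Theorem~\ref{thm:ops-geometries2} (Figure~\ref{fig:item1}): show that the arc forced by $\opfour$-freeness, together with indifference, forces $(b_i,c^{i+1})\in A$, which contradicts the inducedness of $(b_i,c^i,c^{i+1})$.
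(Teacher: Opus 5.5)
Your reduction is sound. By Theorem~\ref{thm:ops-geometries2} and heredity of the class, it is enough to show that no $H\in\mathcal{H}$ admits an order $\prec$ as in Definition~\ref{def:acyclic-indifference}. But that claim is the whole content of the sufficiency direction, and you do not prove it. You list hoped-for routes (``I expect to derive'', ``the aim is to show'', ``alternatively'', ``a third option'') and carry none of them out.

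The one concrete local step you describe also does not give a contradiction. On a forward run, the potential $\opfour$ $(b,c^i,c^{i+1},c^{i+2})$ can only be broken by the chord $(c^i,c^{i+2})$. Indeed, $(b,c^{i+2})\in A$ together with $b\prec c^{i+1}\prec c^{i+2}$ would force $(b,c^{i+1})\in A$ by condition~b). The chord $(c^i,c^{i+2})$ is consistent with everything, and nothing forces $(b_i,c^{i+1})\in A$. Likewise, an arc $(b,b'')$ with $b\prec c\prec b''$ only yields the non-induced path $(b,c,b'')$, as you note yourself, so it does not contradict Item~\ref{ite:Cconcave}.

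The missing idea is an extremal choice with respect to $\prec$. It pushes the endpoints of an induced path through a non-extreme vertex out to a source and a sink. Let $c$ be non-extreme, with $(x,c),(c,y)\in A$ and $(x,y)\notin A$.
\begin{enumerate}
\item Among the in-neighbours $u$ of $c$ with $(u,y)\notin A$, take a $\prec$-minimal one, $s$.
\item Suppose $s$ has an in-neighbour $s'$. If $(s',y)\in A$, then $s'\prec s\prec y$ forces $(s,y)\in A$, which is false; so $(s',y)\notin A$.
\item Minimality of $s$ then gives $(s',c)\notin A$. So $(s',s,c,y)$ is an induced $\opfour$, since backward arcs are excluded by acyclicity. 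This is forbidden, so $s$ is a source.
\item Symmetrically, a $\prec$-maximal out-neighbour $t$ of $c$ with $(s,t)\notin A$ is a sink. Then $(s,c,t)$ is an induced path.
\end{enumerate}
Inside your $H\in\mathcal{H}$ this finishes the argument. The vertices $s$ and $t$ are extreme in $H$ by Proposition~\ref{prop:extremep3s}, so they lie in $B$ by Lemma~\ref{lem:cons}~\ref{ite:Cnoextreme}. This contradicts Item~\ref{ite:Cconcave} of Definition~\ref{def:familyH}.

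Once you have this argument, the detour through $\mathcal{H}$ and Theorem~\ref{thm:ops-geometries2} is unnecessary. Apply it directly to $D$, and to each convex set, which induces a digraph in the same class with the same hulls and extreme vertices. It shows that every non-extreme vertex lies in $\intfuncops$ of a source--sink pair. That is essentially the paper's proof.
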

\begin{proof}
Let $D$ be an acyclic indifference oriented graph. By definition, recall that $D$ is a DAG and $D$ is $\oofour$-free. If $D$ is a convex geometry in the $\opthrees$-convexity, then $D$ must be $\opfour$-free, by Theorem~\ref{thm:ops-geometries2}.

Conversely, every convex subset of an acyclic indifference oriented graph $D$ that is $\overrightarrow{P_4}$-free induces a subdigraph in the same class, as the family is hereditary, and vertex removal cannot create an induced $\overrightarrow{P_4}$. Thus, it suffices to prove that $V(D)$ is the hull of its extreme vertices. Let $\prec$ be a partial order over $V(D)$ as in Definition \ref{def:acyclic-indifference}.

Since $D$ is acyclic, it has at least one source and at least one sink. Then $V(D)$ has extreme vertices. 
Suppose that $V(D)$ has at least one non-extreme vertex $a$, as otherwise $V(D)$ is trivially the convex hull of its extreme vertices. 

Thus, there exist $x,y\in V(D)$ such that $(x,a),(a,y)\in A(D)$, but $(x,y)\notin A(D)$. Among all maximal transitive tournaments that contain both $x$ and $a$, let $T$ be one whose source $s$ is a minimal element, with respect to the order $\prec$, in the subset of vertices of $V(D)$ composed of all sources of all maximal transitive tournaments containing both $x$ and $a$. Let us prove that $s$ is a source of $D$. 

By contradiction, suppose that it is not the case and let $s'\in V(D)$ be such that $(s',s)\in A(D)$. By the choice of $T$, we claim that $(s',a)\notin T$. Otherwise, by the definition of acyclic indifference oriented graphs, note that $s'$, $s$, $x$, and $a$ would belong to another maximal transitive tournament $T'$ in which $s$ is not a source, contradicting its minimality in $\prec$. Note also that $(s',y)\notin A(D)$ and $(s,y)\notin A(D)$, as $(x,y)\notin A(D)$ and $D$ is an acyclic indifference oriented graph. Consequently, $\{s',s,a,y\}$ induces a $\opfour$, a contradiction. Therefore, $s$ must be a source of $D$.

Analogously, one can prove that there is a sink $t$ of $D$ such that $(a,t)\in A(D)$. Note that $(s,t)\notin A(D)$ as $(x,y)\notin A(D)$ and $D$ is an acyclic indifference oriented graph. Therefore, $a$ is in the convex hull of the extreme vertices of $D$, for any non-extreme vertex $a$ of $D$, which completes the proof.
Thus, by a simple brute-force algorithm to look for induced $\opfour$, one can deduce that deciding whether a given acyclic indifference oriented graph $D$ defines a convex geometry in the $\opthrees$-convexity can be done in $\mathcal{O}(n^4)$-time, $n$ being the number of vertices of $D$.
\end{proof}

Actually, the proof in Theorem \ref{thm:acyclic-indifference} implies that $D$ is a convex geometry if and only if each vertex is either a source, a sink, or lies in an induced $\opthrees$ whose endpoints are a source and a sink of $D$. Therefore, we may deduce that:

\begin{corollary}
    \label{cor:algo-ind-acyclic}
    Deciding whether a given acyclic indifference oriented graph $D$ on $n$ vertices and $m$ arcs defines a convex geometry in the $\opthrees$-convexity can be done in ${\cal O}(n^3)$-time.
\end{corollary}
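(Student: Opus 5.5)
The plan is to turn the characterization extracted from the proof of Theorem~\ref{thm:acyclic-indifference} into an explicit algorithm and then bound its cost. That proof shows the following: for an acyclic indifference oriented graph $D$, the set $V(D)$ is the hull of its extreme vertices if and only if every vertex is a source, a sink, or the center of an induced path $(s,a,t)$ with $s$ a source and $t$ a sink of $D$. Indeed, if $D$ is $\opfour$-free, the proof constructs such $s,t$ for every non-extreme $a$. Conversely, if some non-extreme $a$ has no such path, I would argue that $D$ is not a geometry.

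The converse direction is the one step that needs justification beyond the earlier proof. I will take the cleanest route: show that the existence of an induced $\opfour$ can be detected within the same cubic budget. Alternatively, one can show that under the hypotheses an induced $\opfour$ yields a bad vertex of the above form. Since we only need a decision procedure, it is enough to run a test that is equivalent to $\opfour$-freeness. By Theorem~\ref{thm:acyclic-indifference}, geometry is equivalent to $\opfour$-freeness, so the task reduces to detecting an induced $\opfour$ in $O(n^3)$ time.

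The algorithm goes as follows. Compute the adjacency matrix and the sets of sources and sinks in $O(n^2)$ time. For each arc $(b,c)$ (at most $n^2$ of them), consider $N^-(b)\setminus N^-(c)$, the in-neighbors $a$ of $b$ with $(a,c)\notin A(D)$, and $N^+(c)\setminus N^+(b)$, the out-neighbors $d$ of $c$ with $(b,d)\notin A(D)$. An induced $\opfour$ $(a,b,c,d)$ additionally needs $(a,d)\notin A(D)$.

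Here the acyclic indifference structure helps. If $(a,d)\in A(D)$, then since there is an $(a,b)$-path and a $(b,d)$-path, Observation~\ref{obs:acyclicIndif-pathsVersion} forces $(b,d)\in A(D)$. This contradicts the choice of $d$. Hence $(a,d)\notin A(D)$ holds automatically, and an induced $\opfour$ exists if and only if some arc $(b,c)$ has both sets nonempty. Checking nonemptiness of the two sets costs $O(n)$ per arc, giving $O(nm)\subseteq O(n^3)$ overall.

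The main obstacle I expect is the correctness justification. The key step is the observation above that the $(a,d)$ non-adjacency is implied automatically in this class; without it a naive search would cost $O(n^4)$. If that argument failed, I would fall back on checking the source/sink path condition directly. For each non-extreme $a$, I would compute the source in-neighbors $S_a$ and sink out-neighbors $T_a$ of $a$, and test whether some pair in $S_a\times T_a$ is non-adjacent. This costs $O(|S_a||T_a|)$ per vertex, but I would then need a separate bound to stay within $O(n^3)$.
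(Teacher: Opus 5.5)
Your main argument is correct, and it takes a different route from the paper. The paper reads the cubic bound off a per-vertex test taken from the proof of Theorem~\ref{thm:acyclic-indifference}: every vertex is a source, a sink, or the middle of an induced path from a source of $D$ to a sink of $D$. You use only the \emph{statement} of that theorem (geometry if and only if $\opfour$-free), together with one structural observation. In an acyclic indifference graph, a directed path $(a,b,c,d)$ with $(a,d)\in A(D)$ forces $(b,d)\in A(D)$. Hence an induced $\opfour$ exists if and only if some arc $(b,c)$ has both $N^-(b)\setminus N^-(c)\neq\emptyset$ and $N^+(c)\setminus N^+(b)\neq\emptyset$; acyclicity rules out reverse arcs and coincident vertices. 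This is an $O(n^2+nm)$ test, which beats the stated bound on sparse inputs.

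More importantly, your route is the one that actually works, because the paper's per-vertex condition is not equivalent to being a geometry. First, it forgets transitive vertices: the middle vertex of a transitive triangle fails the condition, yet that digraph is a geometry. More seriously, the condition only shows that $V(D)$ itself is the hull of its extreme vertices, not that every convex set is. Take the digraph with arcs $(x_0,x_1),(x_1,x_2),(x_2,x_3),(x_1,t),(s,x_2)$.
\begin{itemize}
\item It is acyclic indifference (order by reachability).
\item Every non-extreme vertex is the middle of $(x_0,x_1,t)$ or $(s,x_2,x_3)$.
\item Yet $\{x_0,x_1,x_2,x_3\}$ is a convex set inducing $\opfour$; its extreme vertices $x_0,x_3$ have hull $\{x_0,x_3\}$.
\end{itemize}
So the fallback you describe, which is essentially the paper's test, would give wrong answers and should be dropped. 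Its running time was never the problem, since $\sum_a |S_a||T_a|\le n^3$.

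The equivalence in your opening paragraph is also inaccurate as written. With arcs $(x_0,x_1),(x_1,x_2),(x_2,x_3),(s,x_2)$, the set $V(D)$ is the hull of its extreme vertices after two rounds, but $x_1$ lies on no source--sink induced path. Your proof never uses that equivalence, though.
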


%%%%%%%%%%%%%%%%%%%%%%%%%%%%%%%%%%%%%%%%
\section{Further research} \label{sec:further}

One should notice that, since the directed paths on three vertices that define the convex sets do not need to be induced/shortest, transitive vertices are no longer extreme vertices in this convexity. One can observe that such oriented graphs must be still acyclic, but our proof of $\oofour$-freeness does not hold in this case. Actually, see Figure \ref{fig:badforP3} for an example of a convex geometry in the $\opthree$ convexity that is not a convex geometry in the $\opthrees$-convexity as it contains an induced directed path on four vertices.

\begin{figure}[!ht]
	\centering
	\begin{tikzpicture}[scale=1.5]
    
      \begin{scope}[every node/.style={draw, fill=white, circle, thick, fill = white, inner sep=2pt}]
        \node (x0) at (-1.5,0) {$x_0$};
        \node (x1) at (-0.5,0) {$x_1$};
        \node (x2) at (0.5,0) {$x_2$};
        \node (x3) at (1.5,0) {$x_3$};
		\node (x4) at (-1,1) {$x_4$};
		\node (x5) at (1,1) {$x_5$};
      \end{scope}
      
      \begin{scope}[>={stealth[black]},
        % 			  every node/.style={fill=white,circle},
        every edge/.style={draw=black,very thick}]
        \path [->] (x0) edge node  {} (x1);
        \path [->] (x1) edge node  {} (x2);
        \path [->] (x2) edge node  {} (x3);
		\path [->] (x0) edge node  {} (x4);
		\path [->] (x4) edge node  {} (x1);
		\path [->] (x4) edge node  {} (x2);
		\path [->] (x4) edge node  {} (x3);
		\path [->] (x0) edge node  {} (x5);
		\path [->] (x1) edge node  {} (x5);
		\path [->] (x2) edge node  {} (x5);
		\path [->] (x5) edge node  {} (x3);
      \end{scope}
    \end{tikzpicture}
    \caption[]{$\opthree$ geometry that is not a $\opthrees$ geometry.}
	\label{fig:badforP3}
\end{figure}

Another interesting direction is to study the $\opthree$-convexity and try to determine which oriented graphs $D$ are convex geometries in the $\opthree$-convexity. One could also try to improve the polynomial-time algorithm to decide whether an oriented graph is a convex geometry in the $\opthree$-convexity, which is a consequence of Theorem \ref{thm:P3-convexity}.

Since deciding whether a directed graph is a convex geometry in the $\opthrees$-convexity is $\coNP$-complete, we can ask whether there are polynomial-time algorithms to decide whether a directed graph is a convex geometry in the $\opthrees$-convexity when it is restricted to particular graph classes. For example, for graphs with bounded treewidth, one can use Courcelle's theorem to decide whether an oriented graph $D$ is a convex geometry in the $\opthrees$-convexity in polynomial time. Indeed, the authors in~\cite{AMM+2026} present MSO formulas to represent the property of $S$ being a convex set and for $S'$ being the convex hull of $S$ in the $\opthrees$-convexity. It is straightforward to propose formulas for the properties of a vertex being extreme, and thus for $D$ being a convex geometry in the $\opthrees$-convexity.

However, the time-complexity of these algorithms is known to be highly exponential in the value of the treewidth. Adhoc algorithms for specific graph classes are still of interest. One can also address this problem for other graph classes, such as bipartite graphs, planar graphs, etc.

%%%%%%%%%%%%%%%%%%%%%%%%%%%%%%%%%%%%%%%%%%

% \section{Declarations}

% The author
% declares that they have no financial interests.

% \section{Data availability}

% No data was used for the research described in the article.

\section*{Acknowledgements}

M. C. Dourado is partially supported by Conselho Nacional de Desenvolvimento Cient\'\i fico e Tecnológico (CNPq), Brazil, Grant numbers 403601/2023-1 and 305141/2024-4. J. Ara\'ujo is partly supported by CNPq projects 313153/2021-3 and 404613/2023-3, Inria Associated Team CANOE and project CAPES-Cofecub 49587PE.


\begin{thebibliography}{00}
\bibitem{ADPS2025}
 J. Ara\'ujo, M. C. Dourado, F. Protti and R. M. Sampaio,
   \textit{Introduction to Graph conexity},
  Springer Cham,
  2025.
\bibitem{AMM+2026}
  J. Ara\'ujo, A. Maia, P. Medeiros and L. Penso,
  \textit{On the hull and interval numbers of oriented graphs (brief announcement)}, Procedia Computer Science 223 (2023), 397-399. XII Latin-American Algorithms, Graphs and Optimization Symposium (LAGOS 2023).

  \bibitem{BG2008} J. Bang Jensen and G. Gutin, \textit{Classes of Digraphs}, Springer Monographs in Mathematics, 2008.

   \bibitem{BM2008} L. Bondy and U. Murty, \textit{Graph Theory}, 1st ed. Springer Publishing Company, Incorporated, 2008.

   \bibitem{DGPST2025} M. C. Dourado, M. Gutierrez, F. Protti, R. M. Sampaio and S. Tondato, \textit{Characterization of graph via convex geometry: A survey}, Discrete Applied Mathematics 360 (2025), 246--257.

   \bibitem{erdHos1972some} P. Erd\"{o}s, E. Fried, A. Hajnal and E. C. Milner, \textit{Some remarks on simple tournaments}, Algebra universalis 2, 1 (1972), 238--245.

   \bibitem{KreinMilman1940} M. Krein and D. Milman, \textit{On extreme points of regular convex sets}, Studia Mathematica 9 (1940), 133--138.

 \bibitem{LO1993} P. J. Looges and S. Olariu,  \textit{Optimal greedy algorithms for indifference graphs}, Computers and Mathematics with Applications 25, 7 (1993), 15–25.527.

 \bibitem{Minkowski1911} H. Minkowski, \textit{Theorie der konvexen korper}, In Gesammelte Abhandlungen, D. Hilbert, Ed., vol. 2 Teubner, Leipzig, 1911, 131--229.

\bibitem{PWW.08} D. B. Parker, R. F. Westhoff and M. J. Wolf, \textit{On two-path convexity in multipartite tournaments}, European J. of Combinatorics 29 (2008), 641–651.

  \bibitem{pfaltz1971convexity} J. L. Pfaltz, \textit{Convexity in directed graphs}, Journal of Combinatorial Theory, Series B 10, 2 (1971), 143--162.

  \bibitem{tesisGuada} M. G. S\'anchez Valluv\'\i , \textit{Operadores de Torneos Transitivos Maximales},
 Tesis doctoral de La Facultad de Ciencias Exactas, Universidad Nacional de La Plata, 2025.

\bibitem{vandeVel1993} M. Van de Vel, \textit{Theory of Convex Structures}, ISSN. Elseiver Science, 1993.
   \end{thebibliography}
\end{document}